\documentclass[11pt,leqno]{amsart}
\usepackage{a4wide}
\usepackage{amssymb,upgreek}
\usepackage{amsmath}
\usepackage{amsthm}
\usepackage{mathrsfs}
\usepackage{bm,euscript,csquotes,comment}
\usepackage[curve]{xypic}
\usepackage[usenames,dvipsnames]{xcolor}

\usepackage{hyperref}

\theoremstyle{plain}

\newcommand{\id}{\operatorname{id}}

\newcommand{\op}{\operatorname{op}}
\newcommand{\pr}{\operatorname{pr}}

\newcommand{\res}{\operatorname{res}}

\newcommand{\Hom}{\operatorname{Hom}}

\newcommand{\Ext}{\operatorname{Ext}}

\newcommand{\Mod}{\operatorname{Mod}}

\newcommand{\ind}{\operatorname{ind}}

\newcommand{\chara}{\operatorname{char}}

\newtheorem{theorem}{Theorem}[section]
\newtheorem{corollary}[theorem]{Corollary}
\newtheorem{lemma}[theorem]{Lemma}
\newtheorem{proposition}[theorem]{Proposition}

\theoremstyle{definition}
\newtheorem{remark}[theorem]{Remark}

\title{\textbf{Local coherence for representations of amalgams}}
\author{Peter Schneider}
\date{\today}

\address{ Universit\"at M\"unster,  Mathematisches Institut,  Einsteinstr. 62, 48291 M\"unster, Germany}
\email{pschnei@uni-muenster.de}
\urladdr{http://www.uni-muenster.de/math/u/schneider/}

\begin{document}

\maketitle

\tableofcontents

\section{Introduction}\label{sec:intro}

Throughout this paper $G$ is a locally profinite group. For most of the first four sections we assume that
\begin{multline*}
  \text{(A) \ $G$ is the amalgamation of two compact open subgroups $K_1, K_2 \subseteq G$} \\
   \text{over their intersection $I := K_1 \cap K_2$.}
\end{multline*}
This means that $G$ is the colimit in the category of discrete groups of the diagram
\begin{equation*}
  \xymatrix@R=0.5cm{
                &         K_1     \\
  I \ar[ur]^{\subseteq} \ar[dr]_{\subseteq}                 \\
                &         K_2                }
\end{equation*}
(cf.\ \cite{Ser} Chap.\ I \S 1).\\

\textit{A basic example:} Let $F/\mathbb{Q}_p$ be a finite extension with ring of integers $\mathcal{O}$ and prime element $\varpi$. The conjugacy classes of maximal compact subgroups in $G := SL_(F)$ are represented by $K_1 := SL_2(\mathcal{O})$ and 
$K_2 := (\begin{smallmatrix}
\varpi^{-1} & 0 \\
0     & 1
\end{smallmatrix})
K_1 
(\begin{smallmatrix}
\varpi & 0 \\
0     & 1
\end{smallmatrix})$. Their intersection $I = K_1 \cap K_2$ is an Iwahori subgroup. By Ihara's theorem $G$ is the amalgamation of $K_1$ and $K_2$ (cf.\ \cite{Ser} Cor.\ II.1.1).\\

We also fix a field $k$ and let $\Mod(G)$ denote the Grothendieck abelian category of smooth $G$-representations in $k$-vector spaces. We recall that a $G$-representation $V$ is called smooth if, for any $v \in V$, its stabilizer $\{g \in G : gv = v\}$ is open in $G$.

\textbf{Assuming} (A) for the rest of this section we observe that restricting the group action gives rise to a commutative diagram of functors:
\begin{equation}\label{diag:lim}
\xymatrix@R=0.5cm{
                            & \Mod(K_1) \ar[dr]^{res}             \\ 
\Mod(G) \ar[ur]^{res} \ar[dr]_{res} &    & \Mod(I)          \\
                            & \Mod(K_2) \ar[ur]_{res}               }
\end{equation}
To see that this actually is a limit diagram we briefly recall Gabriel's construction of the limit as a "recollement'' $\mathcal{R}$ (\cite{Gab} p. 440). The objects of the category $\mathcal{R}$ are triples $(V_1,V_2, \sigma)$ where $V_i$ is an object in $\Mod(K_i)$ and $\sigma : V_1 \xrightarrow{\cong} V_2$ is an isomorphism on $\Mod(I)$; the morphisms are the obvious ones. Projections give rise to obvious functors $\pr_i \mathcal{R} \longrightarrow \Mod(K_i)$ such that the diagram
\begin{equation*}
\xymatrix@R=0.5cm{
                            & \Mod(K_1) \ar[dr]^{res}             \\ 
\mathcal{R} \ar[ur]^{\pr_1} \ar[dr]_{pr_2} &    & \Mod(I)          \\
                            & \Mod(K_2) \ar[ur]_{res}               }
\end{equation*}
is commutative up to natural isomorphism. This diagram has an obvious universal property saying that it is a limit diagram. We also have the exact functor $R : \Mod(G) \longrightarrow \mathcal{R}$ sending $V$ to $(V_{|K_1}, V_{|K_2}, \id)$ such that $\pr_i \circ R$ is the restriction to $K_i$. Obviously $R$ is faithful. To see that $R$ actually is an equivalence of categories let $(V_1,V_2,\sigma)$ be triple in $\mathcal{R}$. We equip $V := V_1$ with a $K_2$-action via
\begin{equation*}
  g v := \sigma^{-1}(g(\sigma v))) \qquad\text{$v \in V_1$ and $g \in K_2$}.
\end{equation*}
In this way $V$ carries a smooth $K_1$-action as well as a smooth $K_2$-action which coincide on $I$. By the colimit property of $G$ this corresponds to a unique extension of the these $K_i$-actions to a smooth $G$-action on $V$. Moreover
\begin{equation*}
  (\id,\sigma) : (V,V,\id) \xrightarrow{\cong} (V_1,V_2, \sigma)
\end{equation*}
is an isomorphism in $\mathcal{R}$. This shows that \eqref{diag:lim} is a limit diagram. This immediately implies that, for any $V_i$ in $\Mod(K_i)$, the sequence
\begin{equation}\label{f:basic-sequence}
  \xymatrix{
    0 \ar[r] & \Hom_G(V_1,V_2) \ar[rr]^-{(\res^G_{K_1},\res^G_{K_2})} && \Hom_{K_1}(V_1,V_2) \times \Hom_{K_2}(V_1,V_2) \ar[rr]^-{\res^{K_1}_I - \res^{K_2}_I} && \Hom_I(V_1,V_2)  }
\end{equation}
is exact. It is the starting point for the proof of the first main result. In order to simplify notation we write $\Ext^j_G$ for the Ext-groups in the category $\Mod(G)$.

\begin{theorem}\label{thm:MV}
  For any $V_1, V_2$ in $\Mod(G)$ we have the functorial long exact sequence
\begin{align*}
  0 \rightarrow & \Hom_G(V_1,V_2) \rightarrow \Hom_{K_1}(V_1,V_2) \times \Hom_{K_2}(V_1,V_2) \rightarrow \Hom_I(V_1,V_2) \rightarrow  \\
        & \Ext^1_G(V_1,V_2) \rightarrow  ...    \\
        & \ldots \\
        & \ldots  \\
        & \Ext^j_G(V_1,V_2) \rightarrow \Ext^j_{K_1}(V_1,V_2) \times \Ext^j_{K_2}(V_1,V_2) \rightarrow \Ext^j_I(V_1,V_2) \rightarrow \qquad\qquad\qquad \\
        & \Ext^{j+1}_G(V_1,V_2) \rightarrow  ...  
\end{align*}
\end{theorem}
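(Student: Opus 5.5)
The plan is to derive the long exact sequence as the long exact sequence of a short exact sequence of injective resolutions, using \eqref{f:basic-sequence} as the degree-zero input. Fix $V_1$ and choose an injective resolution $V_2 \to \mathcal{J}^\bullet$ in $\Mod(G)$. The first point is that restriction from $G$ to an open subgroup $H$ (here $H = K_1, K_2, I$) preserves injectives. The reason is that $\res^G_H$ has the exact left adjoint compact induction $\ind_H^G$; this is exact since $H$ is open and $\ind_H^G W \cong k[G]\otimes_{k[H]} W$. Restriction is also exact. Hence $\mathcal{J}^\bullet|_H$ is an injective resolution of $V_2|_H$ in $\Mod(H)$, so $\Ext^j_H(V_1,V_2)$ is the cohomology of $\Hom_H(V_1,\mathcal{J}^\bullet)$.

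Next I would show that for every injective $J$ in $\Mod(G)$ the sequence \eqref{f:basic-sequence} extends to a short exact sequence
\[
0 \to \Hom_G(V_1,J) \to \Hom_{K_1}(V_1,J)\times\Hom_{K_2}(V_1,J) \to \Hom_I(V_1,J) \to 0 ,
\]
i.e.\ that the difference map is surjective. This is the main obstacle. I would first treat $V_1=\ind_H^G k$ generically and then reduce to it. Concretely, by Frobenius reciprocity $\Hom_{K_i}(V_1,J)=\Hom_G(\ind_{K_i}^G V_1|_{K_i},J)$, and similarly for $I$. Since $J$ is injective, surjectivity follows if the map of $G$-representations
\[
\ind_I^G(V_1|_I) \to \ind_{K_1}^G(V_1|_{K_1}) \oplus \ind_{K_2}^G(V_1|_{K_2})
\]
dual to the difference map is injective. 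In fact I expect a short exact sequence
\[
0\to \ind_I^G(V_1|_I) \to \ind_{K_1}^G(V_1|_{K_1})\oplus \ind_{K_2}^G(V_1|_{K_2}) \to V_1 \to 0 .
\]
Since $\ind_H^G(V_1|_H)\cong k[G/H]\otimes_k V_1$ with diagonal action, this is the tensor product with $V_1$ of the augmented cellular chain complex
\[
0\to k[G/I]\to k[G/K_1]\oplus k[G/K_2]\to k\to 0
\]
of the Bass--Serre tree of the amalgam (\cite{Ser} Chap.\ I \S 4). That chain complex is exact because the tree is contractible, and tensoring over $k$ preserves exactness. Left exactness of $\Hom_G(-,J)$ applied to this sequence recovers \eqref{f:basic-sequence}, and injectivity of $J$ gives the missing surjectivity. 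The care needed here is in identifying the Frobenius reciprocity isomorphisms with the restriction maps and getting the signs right.

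With this in hand, apply the construction termwise to $\mathcal{J}^\bullet$. This gives a short exact sequence of complexes
\[
0\to \Hom_G(V_1,\mathcal{J}^\bullet)\to \Hom_{K_1}(V_1,\mathcal{J}^\bullet)\times \Hom_{K_2}(V_1,\mathcal{J}^\bullet)\to \Hom_I(V_1,\mathcal{J}^\bullet)\to 0,
\]
whose cohomology long exact sequence is exactly the claimed one, by the first step. Functoriality in $V_1$ is immediate. Functoriality in $V_2$ follows from lifting maps to injective resolutions, unique up to homotopy, and the naturality of the connecting homomorphism. Alternatively, the displayed short exact sequence of $G$-representations gives the result directly, via $\Ext^j_G(\ind_H^G W, V_2)\cong \Ext^j_H(W,V_2)$ (Shapiro).
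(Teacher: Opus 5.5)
Your proposal is correct and follows essentially the paper's route. The key input is the short exact sequence $0 \to \ind_I^G(V_1) \to \ind_{K_1}^G(V_1)\oplus\ind_{K_2}^G(V_1)\to V_1\to 0$, obtained from the Bass--Serre tree and the tensor identity (property c)); after that, Frobenius reciprocity for $\Ext$ (restriction preserves injectives) gives the long exact sequence, exactly as in your final alternative. The remaining differences are minor. The paper gets exactness in the middle and on the right from the limit property via universality, and proves injectivity of $(\gamma_1,-\gamma_2)$ by an explicit leaf-stripping argument on the tree, instead of citing acyclicity of the tree. It also carries out the identifications you only flagged: the dual of the difference-of-restrictions map is $\Hom_G((\gamma_1,-\gamma_2),V_2)$, and each $\gamma_i$ is compatible with the tensor identity.
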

The proof will be given in section \ref{sec:MV}. The main technical tool for it are the functors of compact induction whose for our purposes specific properties will be developed in section \ref{sec:compact-ind}. Our second main result, whose proof is contained in section \ref{sec:loc-coh}, then is the following.

\begin{theorem}\label{thm:loc-coh}
   The category $\Mod(G)$ is locally coherent.
\end{theorem}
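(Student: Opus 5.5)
The plan is to verify the two defining properties of local coherence separately. First, $\Mod(G)$ is locally finitely presented. Second, every finitely generated subobject of a finitely presented object is finitely presented.

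For the first property I use the representations $\ind_U^G(k)$, for $U$ running over the compact open subgroups of $G$ contained in $I$. They generate $\Mod(G)$, since every vector is fixed by some such $U$. They are finitely presented because $\Hom_G(\ind_U^G(k),-) = (-)^U$ commutes with filtered colimits. More generally, I will use the properties of compact induction from section \ref{sec:compact-ind}: for $K$ compact open and $W$ a finite-dimensional smooth $K$-representation, $\ind_K^G(W)$ is finitely presented. The reason is that $W$ is finitely presented in $\Mod(K)$, and compact induction is left adjoint to the restriction functor, which preserves filtered colimits.

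For the second property I first make the standard reduction. Let $V = P/N$ with $P$ a finite direct sum of such generators and $N$ finitely generated. A finitely generated $M \subseteq V$ has a finitely generated preimage $\tilde M \subseteq P$, namely $N$ plus lifts of generators, and $M = \tilde M/N$. Hence it suffices to show that finitely generated subrepresentations of $P$ are finitely presented. By the same argument applied to the obvious filtration of a finite direct sum, it suffices to treat a single $P = \ind_{K_1}^G(W)$, or $P = \ind_{K_2}^G(W)$, with $W$ finite-dimensional.

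The essential input is that the compact groups behave noetherianly. A finitely generated smooth $K$-representation, for $K \in \{K_1, K_2, I\}$, is a quotient of finitely many $k[K/U]$ and is therefore finite-dimensional. So $\Mod(K_i)$ and $\Mod(I)$ are locally noetherian, with the finite-dimensional representations as noetherian objects.

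Next I use the amalgam structure through the tree. Assumption (A) gives, as in the proof of Theorem \ref{thm:MV}, a functorial exact sequence
\begin{equation*}
0 \to \ind_I^G(M_{|I}) \to \ind_{K_1}^G(M_{|K_1}) \oplus \ind_{K_2}^G(M_{|K_2}) \to M \to 0
\end{equation*}
for any $M$ in $\Mod(G)$. I want a finite version of it. For $M \subseteq P$ finitely generated, I choose finite-dimensional subspaces $M_1, M_2 \subseteq M$ with $M_i$ stable under $K_i$, containing the chosen generators, and set $M_0 := M_1 \cap M_2$, which is $I$-stable. This gives a map
\begin{equation*}
\ind_{K_1}^G(M_1) \oplus \ind_{K_2}^G(M_2) \to M,
\end{equation*}
which is surjective because the generators lie in the image. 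Its kernel contains the image of $\ind_I^G(M_0)$.

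The aim is to show that, after enlarging $M_1$ and $M_2$ suitably, the kernel is exactly the image of $\ind_I^G(M_0)$. This makes $M$ a cokernel of a map between finitely presented objects, hence finitely presented. To get such an enlargement I exploit that $M \subseteq \ind_{K_1}^G(W)$ consists of compactly supported functions. The natural choice is to let $M_1$ and $M_2$ be the intersections of $M$ with the subspaces of functions supported on a large finite ball of the Bass--Serre tree around the vertex $K_1$ (respectively $K_2$). These intersections are finite-dimensional and stable under the corresponding vertex group.

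The main obstacle is proving that the relations are then generated in degree one, by $\ind_I^G(M_0)$, once the ball is large. I would first try an induction on the radius. When passing from radius $r$ to $r+1$, the new relations should be controlled by the $I$-representations $M_0$ at the new boundary edges. This uses the noetherianity of finite-dimensional $K_i$-representations: the increasing chain of $K_1$-stable subspaces $M_1^{(r)}$ stabilises in the relevant quotient, so the process terminates. The comparison with $P$ itself is what makes this go through. For $P$ the analogous finite tree sequence is exact, since $H_1$ of the tree vanishes, and $M$ sits inside $P$ compatibly with the ball filtration.
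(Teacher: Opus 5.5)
\textbf{Gap: the exactness step for $M$ is not proved.} Your preliminary reductions are fine. These are local finite presentability, the reduction to a finitely generated $M \subseteq P = \ind_{K_1}^G(W)$, and the choice $M_i = M \cap P_{B(v_i,r)}$, where $P_S$ denotes the functions in $P$ supported on the cosets $K_1g$ with $g^{-1}v_1 \in S$.

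The decisive step is that the kernel of $\ind_{K_1}^G(M_1)\oplus\ind_{K_2}^G(M_2)\to M$ is exactly the image of $\ind_I^G(M_0)$. You only announce it, and the mechanism you sketch does not deliver it. Noetherianity of finite-dimensional $K_i$-representations is vacuous here. The spaces $M\cap P_{B(v_1,r)}$ keep growing with $r$, so nothing stabilises. More fundamentally, exactness of the analogous complex for $P$ does not by itself pass to the subcomplex for $M$, since a subcomplex of an exact complex need not be exact. The phrase ``$M$ sits inside $P$ compatibly with the ball filtration'' is exactly the claim that needs proof. As written, the argument stops at the point where the tree has to be used.

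\textbf{The missing idea, and how your route compares with the paper's.} Let $\mathcal{P}$ be the $G$-equivariant coefficient system on $\mathcal{T}$ with $\mathcal{P}(z) = P_{B(z,r)}$ for a vertex $z$, and $\mathcal{P}(\epsilon) = P_{B(x,r)\cap B(y,r)}$ for an edge $\epsilon$ with endpoints $x,y$. Put $\mathcal{M}(\sigma) := \mathcal{P}(\sigma)\cap M$.
\begin{itemize}
\item The chain complex of $\mathcal{M}$ is your $\ind_I^G(M_0)\to\ind_{K_1}^G(M_1)\oplus\ind_{K_2}^G(M_2)$, with the maps $\gamma_i$ becoming edge-to-vertex maps as in Lemma~\ref{lem:gamma-k}.
\item Write $P = \bigoplus_w P_{\{w\}}$ over the type-$1$ vertices $w$. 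Then the chain complex of $\mathcal{P}$ is $\bigoplus_w P_{\{w\}}\otimes C_\bullet(B(w,r))$, and each ball is a finite subtree. Hence $H_1(\mathcal{P}) = 0$ and $H_0(\mathcal{P}) = P$.
\item Every value of $\mathcal{P}/\mathcal{M}$ embeds compatibly into the single space $P/M$. This is where taking $M_i = M\cap P_{B(v_i,r)}$, rather than arbitrary $K_i$-stable subspaces, matters. So all edge-to-vertex maps of $\mathcal{P}/\mathcal{M}$ are injective, and the leaf argument in the proof of Prop.~\ref{prop:MV-trivial} gives $H_1(\mathcal{P}/\mathcal{M}) = 0$.
\item The long exact homology sequence then makes $H_0(\mathcal{M})\to P$ injective. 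This is the exactness you want, and it holds for every $r$; large $r$ is needed only for surjectivity onto $M$.
\end{itemize}

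Completed this way, your route yields an explicit finite presentation of $M$ without external input. The paper argues homologically instead. It combines the Ext sequence of Theorem~\ref{thm:MV2} with the fact that $\mathcal{FP}$-injectives restrict to injectives on compact open subgroups. This gives injective dimension $\le 1$ (Prop.~\ref{prop:inj-dim}), hence closure of $\mathcal{FP}$-inj$(G)$ under cokernels of monomorphisms (Cor.~\ref{cor:coker}). Then \cite{BGP} Thm.~5.5 converts this into local coherence.
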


In \cite{Sho} this theorem has been proved under the additional assumptions that $G$ is a locally analytic $p$-adic group and $k$ has characteristic $p$. The proof is completely different from the one in this paper. It crucially uses that certain completed group rings under these additional assumptions are noetherian (by a theorem of Lazard). Therefore the proof in \cite{Sho} cannot be generalized to the setting of this paper.

In the last section \ref{sec:vista} we discuss generalizations of our results to groups $G = \mathbf{G}(L)$ of $L$-points, where $L/\mathbb{Q}_p$ is a finite extension and  $\mathbf{G}$ is a semisimple simply connected reductive group over $L$. For example, in this case the long exact $\Ext$-sequence in Theorem \ref{thm:MV} has to be replaced by a spectral sequence.

The main interest in the question for which groups $G$ the category $\Mod(G)$ is locally coherent comes from the $p$-adic local Langlands program - see \cite{EGH} Conjecture 6.1.15.

This research was funded by the Deutsche Forschungsgemeinschaft (DFG, German Research Foundation) – Project-ID 427320536 – SFB 1442, as well as under Germany's Excellence Strategy EXC 2044/2 –390685587, Mathematics Münster: Dynamics–Geometry–Struc- ture.

\section{Compact induction}\label{sec:compact-ind}

Let $K \subseteq G$ be an arbitrary compact open subgroup. The compact induction functor
\begin{equation*}
  \ind_K^G : \Mod(K) \longrightarrow \Mod(G)
\end{equation*}
is given by
\begin{align*}
  \ind_K^G(V) := &\ \text{all locally constant compactly supported functions $f : G \rightarrow V$}  \\
                 & \ \text{satisfying}\ f(hg) = hf(g)\ \text{for any $h \in K$ and $g \in G$}.
\end{align*}
An element $g \in G$ acts on $f \in \ind_K^G(V)$ by ${^g}f(g') = f(g' g)$. The following properties are easy to verify (cf.\ \cite{Vig} I.5):
\begin{itemize}
  \item[a)] The functor $\ind_K^G$ is exact.
  \item[b)] The functor $\ind_K^G$ is left adjoint to the restriction functor $\res_K^G$.
  \item[c)] For any $V$ in $\Mod(G)$ one has the natural $G$-equivariant isomorphism
\begin{align*}
  \ind_K^G(k) \otimes_k V & \xrightarrow{\;\cong\;} \ind_K^G(\res_K^G(V)) \\
              f \otimes v & \longmapsto F_{f \otimes v}(g) := f(g) gv 
\end{align*}
with inverse $F \longmapsto \sum_{h \in K \backslash G} \chara_{Kh} \otimes\, h^{-1} F(h)$, where $G$ acts diagonally on the left hand side. Here $\chara_A \in \ind_K^G(k)$ denotes, for any left $K$-invariant subset $A \subseteq G$, the characteristic function of $A$.
\end{itemize}

For the rest of this section we \textbf{assume} (A) and let $V_1$ and $V_2$ be two objects in $\Mod(G)$. By using property b) we may rewrite the exact sequence \eqref{f:basic-sequence} in the following form:
\begin{multline}\label{f:Hom-ind-sequence}
  0 \rightarrow \Hom_G(V_1,V_2) \rightarrow \Hom_G(\ind_{K_1}^G(V_1),V_2) \times \Hom_G(\ind_{K_1}^G(V_1),V_2) \\
      = \Hom_G(\ind_{K_1}^G(V_1) \oplus \ind_{K_2}^G(V_1),V_2)  \rightarrow \Hom_G(\ind_I^G(V_1),V_2)
\end{multline}
Here and in the following we will usually omit the restriction functor to a compact open subgroup from the notation.
The task at hand is to identify the maps in this sequence. For this we have to recall first the Frobenius isomorphism
\begin{equation*}
  \mathcal{R}_{U,V_2} : \Hom_K(U,V_2) \xrightarrow{\cong} \Hom_G(\ind_K^G(U),V_2)
\end{equation*}
for any $U$ in $\Mod(K)$. We have the map
\begin{align*}
  \iota : U & \longrightarrow \ind_K^G(U) \\
           u & \longmapsto i_u(g) :=
           \begin{cases}
           gu & \text{if $g \in K$}, \\
           0 & \text{otherwise}.
           \end{cases}
\end{align*}

\begin{remark}
$\iota$ is $K$-equivariant.
\end{remark}
\begin{proof}
For $h \in K$ we compute
\begin{equation*}
  \iota(hu)(g) = i_{hu}(g) = \begin{cases}
           ghu & \text{if $g \in K$}, \\
           0 & \text{otherwise}
            \end{cases}
\end{equation*}
and
\begin{equation*}
  ({^h \iota}(u))(g) = ({^h i_u})(g) = i_u(gh) =
  \begin{cases}
           ghu & \text{if $g \in K$}, \\
           0 & \text{otherwise}.
            \end{cases}
\end{equation*}
\end{proof}
We see that any $f \in \ind^G_K(U)$ can be written as the sum $f = \sum_{g \in K \backslash G} {^{g^{-1}}} i_{f(g)}$.
The reciprocity isomorphism $\mathcal{R}_{U,V_2}$ is given by (compare \cite{Vig} formula (3) on p.\ 39)
\begin{equation*}
  \mathcal{R}_{U,V_2}(S)(f) = \sum_{g \in K \backslash G} \mathcal{R}_{U,V_2}(S)( {^{g^{-1}}} i_{f(g)}) = \sum_{g \in K \backslash G} g^{-1}\mathcal{R}_{U,V_2}(S)(\iota(f(g))) = \sum_{g \in K \backslash G} g^{-1}S(f(g)) .
\end{equation*}
For $V_1$ instead of $U$ and $S \in \Hom_G(V_1,V_2)$ this simplifies to
\begin{equation}\label{f:R}
   \mathcal{R}_{V_1,V_2}(S)(f) = S(\sum_{g \in K \backslash G} g^{-1}f(g)) .
\end{equation}
Next we introduce the map
\begin{align*}
  \pi := \pi_{G,K} : \ind_K^G(V_1) & \longrightarrow V_1 \\
                    f & \longmapsto \sum_{g \in K \backslash G} g^{-1} f(g)  \ .
\end{align*}
This is well defined since the defining sum is independent of the choice of the coset representatives.

\begin{remark}
   $\pi_{G,K}$ is $G$-equivariant.
\end{remark}
\begin{proof}
For $g_0 \in G$ we compute
\begin{multline*}
  \pi({^{g_0}f}) = \sum_{g \in K \backslash G} g^{-1} ({^{g_0}f})(g) = \sum_{g \in K \backslash G} g^{-1} f(g g_0) = \sum_{g \in K \backslash G} g_0 g^{-1} f(g) \\
    = g_0 \sum_{g \in K \backslash G} g^{-1} f(g) = g_0 \pi(f) \ .
\end{multline*}
\end{proof}

The formula \eqref{f:R} now can be rewritten as
\begin{equation*}
  \mathcal{R}_{V_1,V_2}(S)(f) = S(\pi_{G,K}(f)) \ .
\end{equation*}

This means that the diagram
\begin{equation}\label{diag:R}
  \xymatrix{
    &&& \Hom_K(V_1,V_2) \ar[dd]_{\cong}^{\mathcal{R}_{V_1,V_2}} \\
    \Hom_G(V_1,V_2) \ar[urrr]^{\res^G_K} \ar[drrr]^-{\Hom_G(\pi_{G,K},V_2)}  \\
      &&& \Hom_G(\ind_K^G(V_1),V_2)   }
\end{equation}
is commutative. The upper oblique arrow is obviously injective. Hence the lower one is injective, too. This also follows from the

\begin{remark}
   $\pi_{G,K} \circ \iota = \id$ .
\end{remark}
\begin{proof}
For $v \in V_1$ we compute $\pi_{G,K} (\iota(v)) = \pi (i_v) = \sum_{g \in K \backslash G} g^{-1} i_v(g) = i_v(1) = v$.
\end{proof}

Now we have everything in place to determine the maps in the sequence \eqref{f:Hom-ind-sequence}. We finally introduce, for $i = 1, 2$. the $G$-equivariant maps
\begin{equation*}
  \gamma_i : \ind_I^G(V_1) = \ind_{K_i}^G(\ind_I^{K_i}(V_1)) \xrightarrow{\ind_{K_i}^G(\pi_{K_i,I})} \ind_{K_i}^G(V_1) \ .
\end{equation*}

\begin{proposition}\label{prop:dual-sequence}
  The exact sequence \eqref{f:Hom-ind-sequence} arises, by applying the functor $\Hom_G(- , V_2)$, from the exact sequence
\begin{equation*}
  \ind_I^G(V_1) \xrightarrow{(\gamma_1,-\gamma_2)} \ind_{K_1}^G(V_1) \oplus \ind_{K_2}^G(V_1) \xrightarrow{\pi_{G,K_1} + \pi_{G,K_2}} V_1 \longrightarrow 0 
\end{equation*}
with $\gamma_i$ explicitly given by $\gamma_i(f)(g) = \sum_{h \in I \backslash K_i} h^{-1} f(hg)$.
\end{proposition}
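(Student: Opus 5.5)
The plan has three steps: identify the maps of \eqref{f:Hom-ind-sequence} under Frobenius reciprocity, prove the explicit formula for $\gamma_i$, and deduce exactness of the displayed sequence from exactness of \eqref{f:Hom-ind-sequence} for all $V_2$.

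\textbf{Identifying the maps.} The first map of \eqref{f:Hom-ind-sequence} is, componentwise, $\mathcal{R}_{V_1,V_2}\circ\res^G_{K_i}$. By the commutative diagram \eqref{diag:R} this equals $\Hom_G(\pi_{G,K_i},V_2)$. Taking both components together, it is $\Hom_G(\pi_{G,K_1}+\pi_{G,K_2},V_2)$ on the direct sum.

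For the second map I will show that, for $S\in\Hom_{K_i}(V_1,V_2)$,
\[
\mathcal{R}^{I}_{V_1,V_2}(\res^{K_i}_I S)=\mathcal{R}^{K_i}_{V_1,V_2}(S)\circ\gamma_i .
\]
This holds because Frobenius reciprocity is transitive. The isomorphism for $I\subseteq G$ is the composite of the one for $I\subseteq K_i$ with the one for $K_i\subseteq G$, applied to $\ind_I^G=\ind_{K_i}^G\ind_I^{K_i}$. The one for $I\subseteq K_i$ sends $S$ to $S\circ\pi_{K_i,I}$, by \eqref{diag:R} applied to the pair $(K_i,I)$. Naturality of the reciprocity for $K_i\subseteq G$ in the $K_i$-module then turns precomposition with $\pi_{K_i,I}$ into precomposition with $\ind_{K_i}^G(\pi_{K_i,I})=\gamma_i$.

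I will also check this directly from the sum formula for $\mathcal{R}$, since the paper gives explicit formulas. For $f\in\ind_I^G(V_1)$ we have
\[
\sum_{g\in I\backslash G} g^{-1}S f(g)=\sum_{g\in K_i\backslash G}\sum_{h\in I\backslash K_i} g^{-1}h^{-1}S f(hg)
=\sum_{g\in K_i\backslash G} g^{-1}S\bigl(\gamma_i(f)(g)\bigr).
\]
The difference map $\res^{K_1}_I-\res^{K_2}_I$ therefore becomes $\Hom_G((\gamma_1,-\gamma_2),V_2)$.

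\textbf{The explicit formula for $\gamma_i$.} This is immediate from the identification $\ind_I^G=\ind_{K_i}^G\ind_I^{K_i}$, given by $f\mapsto(g\mapsto(h\mapsto f(hg)))$, followed by applying $\pi_{K_i,I}$ to the inner function.

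\textbf{Exactness of the displayed sequence.} The composite is zero: $\pi_{G,K_i}\circ\gamma_i=\pi_{G,I}$, by the same double coset sum, so $\pi_{G,K_1}\gamma_1-\pi_{G,K_2}\gamma_2=\pi_{G,I}-\pi_{G,I}=0$. Surjectivity of $\pi_{G,K_1}+\pi_{G,K_2}$ follows from $\pi\circ\iota=\id$. For exactness in the middle, let $C$ be the cokernel of $(\gamma_1,-\gamma_2)$, and let $\bar\pi:C\to V_1$ be the map induced by $\pi_{G,K_1}+\pi_{G,K_2}$, which is surjective. Since \eqref{f:Hom-ind-sequence} is exact for every $V_2$, $\Hom_G(V_1,V_2)\to\Hom_G(C,V_2)$ is bijective for every $V_2$. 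Taking $V_2=C$, the identity of $C$ factors through $\bar\pi$, so $\bar\pi$ is injective, hence an isomorphism. This Yoneda-type step is the main point. The only other things to watch are the sign conventions and the well-definedness of the coset sums.
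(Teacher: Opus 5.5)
Your proposal is correct and takes essentially the same route as the paper. The first map is identified via the diagram \eqref{diag:R}. The second is identified via naturality and transitivity of Frobenius reciprocity together with the isomorphism $\ind_I^G \cong \ind_{K_i}^G \ind_I^{K_i}$, and the formula for $\gamma_i$ is read off from that isomorphism. Your direct coset-sum check of transitivity and your Yoneda argument for exactness in the middle make explicit what the paper leaves implicit in its diagram and in the phrase ``the exactness follows from universality''.
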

\begin{proof}
We have the commutative diagrams
\begin{equation*}
  \xymatrix{
      &&& \Hom_I(V_1,V_2) \ar[d]_{\cong}^{\mathcal{R}_{V_1,V_2}} \\
    \Hom_{K_i}(V_1,V_2) \ar[d]^{\cong}_{\mathcal{R}_{V_1,V_2}} \ar[urrr]^{\res_I^{K_i}}  \ar[rrr]_-{\Hom_{K_i}(\pi_{K_i,I},V_2)} &&& \Hom_{K_i}(\ind_I^{K_i}(V_1),V_2) \ar[d]_{\cong}^{\mathcal{R}_{\ind_I^{K_i}(V_1),V_2)}} \\
    \Hom_G(\ind_{K_i}^G(V_1),V_2)  \ar[rrr]_-{\Hom_G(\ind_{K_i}^G(\pi_{K_i,I}),V_2)} &&& \Hom_G(\ind_{K_i}^G(\ind_I^{K_i}(V_1),V_2) \ar[d]_{\cong}^{\Hom_G(\widetilde{.},V_2)} \\
     &&& \Hom_G(\ind_I^G(V_1),V_2) ,  }
\end{equation*}
where the bottom perpendicular arrow uses the transitivity isomorphism (\cite{Vig} I.5.3)
\begin{align*}
  \ind_I^G(V_1) & \xrightarrow{\;\cong\;} \ind_{K_i}^G(\ind_I^{K_i}(V_1)) \\
              f & \longmapsto \widetilde{f}(g) := {^g f} | K_i
\end{align*}
whose inverse is given by $F \longmapsto [g \mapsto F(g)(1)]$. The upper triangle is the diagram \eqref{diag:R} for the groups $I \subseteq K_i$. The middle square is commutative by the naturality of the Frobenius isomorphism. For the left arrow it remains to compute
\begin{multline*}
  \gamma_i(f)(g) = \ind_{K_i}^G(\pi_{K_i,I})(\widetilde{f})(g) = [g \mapsto \pi_{K_i,I} ({^g f}|K_i)]  \\
   = [g \mapsto   \sum_{h \in I \backslash K_i} h^{-1} ({^g f})(h)] = \sum_{h \in I \backslash K_i} h^{-1} f(hg) \ .
\end{multline*}
This proves the assertion for the left arrow. The assertion for the right arrow is immediate from the diagram \eqref{diag:R}.

The exactness follows from universality.
\end{proof}

\section{Mayer-Vietoris sequence}\label{sec:MV}

Throughout this section we \textbf{assume} (A). The technical task is to show that the sequence in Prop.\ \ref{prop:dual-sequence} actually is a short exact sequence, i.e., that the map $(\gamma_1, \gamma_2)$ is injective. This requires the fact that with the amalgamated group $G$ there is associated a tree on which $G$ acts. We recall this from \cite{Ser} Thm.\ I.4.1.6.

\begin{theorem}\label{thm:tree}
   The amalgamated group $G$ acts, without inversion and transitively on the edges, on a tree $\mathcal{T}$ such that for some edge $e$ with vertices $v_1$ and $v_2$ the subgroups $K_i$ and $I$ are the stabilizers of the vertices $v_i$ and the edge $e$, respectively.
\end{theorem}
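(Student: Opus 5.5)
This is Serre's theorem, and I would prove it by building the tree explicitly from cosets, following \cite{Ser} I.4.1. Define a graph $\mathcal{T}$ with vertex set $G/K_1 \sqcup G/K_2$ and (geometric) edge set $G/I$. The edge $gI$ joins the vertices $gK_1$ and $gK_2$, and we orient it from the $K_1$-side to the $K_2$-side. This is well defined since $I \subseteq K_1 \cap K_2$. The group $G$ acts by left multiplication on vertices and edges, compatibly with incidence.

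Several properties are then immediate. The action is without inversion, because it preserves the bipartition of the vertex set into $G/K_1$ and $G/K_2$. It is transitive on edges, since the edge set is the single orbit $G/I$. For $e = I$, $v_1 = K_1$, $v_2 = K_2$, the stabilizers are $K_1$, $K_2$ and $I$ respectively, by construction.

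Connectedness comes from generation. Since $G$ is the colimit, it is generated by $K_1 \cup K_2$. The vertices reachable from $v_1$ and $v_2$ form a set stable under $K_1$ and $K_2$. Indeed, an element of $K_1$ fixes $v_1$ and maps edges to edges, so it permutes the connected component containing $v_1$, and similarly for $K_2$ and $v_2$; moreover $v_1$ and $v_2$ lie in the same component via $e$. So this component is $G$-stable, and since it contains representatives of every orbit, it is all of $\mathcal{T}$.

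The main obstacle is showing that $\mathcal{T}$ has no circuits, and this is where the amalgam property (A) enters, through the normal form theorem. Choose sets of right coset representatives $S_i$ of $I$ in $K_i$ with $1 \in S_i$. Every $g \in G$ can then be written uniquely as $g = a\, s_1 s_2 \cdots s_n$, where $a \in I$, each $s_j \in S_{i_j} \setminus \{1\}$, and consecutive indices $i_j$ alternate. I would establish this via the standard van der Waerden trick. Let $G$ act on the set $X$ of such reduced words: $K_i$ acts by absorbing or prepending letters, using the bijection $K_i \cong I \times S_i$. The actions of $K_1$ and $K_2$ agree on $I$, so by the colimit property they extend to an action of $G$ on $X$. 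Evaluating this action on the empty word gives injectivity of the map from reduced words to $G$, and hence uniqueness.

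Given a nontrivial closed path without backtracking in $\mathcal{T}$, I would translate it by an element of $G$ so that it starts at $v_1$. Reading off its successive edges produces a product $s_1 \cdots s_n$ with alternating nontrivial coset representatives which lies in $K_1$, i.e.\ ends back at $v_1$. For $n \ge 1$ this contradicts uniqueness of the normal form. Hence $\mathcal{T}$ is a tree.
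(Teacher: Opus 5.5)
Your proposal is correct and reproduces the standard argument of \cite{Ser} Chap.\ I \S 4.1: the coset graph, connectedness from generation by $K_1 \cup K_2$, and absence of circuits via the normal form theorem. The paper itself gives no proof and simply cites this theorem of Serre, so your approach is essentially the paper's.

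One bookkeeping point needs fixing. Your graph is built from left cosets, so a closed path at $v_1$ reads off as $k_1 k_2 \cdots k_n \in K_1$ with $n$ even, $k_1 \in K_1$, and $k_j$ lying alternately in $K_2 \setminus I$ and $K_1 \setminus I$ for $j \geq 2$. These are not literally your right coset representatives, and $k_1$ may lie in $I$ if the first edge is $e$. The contradiction should therefore be stated this way: the product has normal form of length $\geq 2$, or of length $1$ with its letter in $S_2$. Every element of $K_1$ has normal form $a$ or $as$ with $s \in S_1$. As written, your ``$n \geq 1$'' fails, since a single $s_1 \in S_1$ does lie in $K_1$.
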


Let $edges(\mathcal{T})$ and $vertices(\mathcal{T})$ denote the set of edges and the set of vertices of $\mathcal{T}$, respectively. We then have $G$-equivariant bijections
\begin{align}\label{f:edge}
  I \backslash G & \xrightarrow{\;\sim\;} edges(\mathcal{T})  \\
  Ig & \longmapsto g^{-1} e       \nonumber
\end{align}
and
\begin{align}\label{f:vertex}
   K_1 \backslash G \ \dot\cup\  K_2 \backslash G & \xrightarrow{\;\sim\;} vertices(\mathcal{T}) \\
                                            K_i g & \longmapsto g^{-1} v_i \ .                   \nonumber
\end{align}
We use this to establish the special case where $V_1 = k$ is the trivial $G$-representation first.

\begin{lemma}\label{lem:gamma-k}
   For any $i = 1, 2$ and any $g \in G$ we have $\gamma_i(\chara_{Ig}) = \chara_{K_i g}$.
\end{lemma}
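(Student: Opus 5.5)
We verify the identity pointwise, using the explicit formula for $\gamma_i$ from Proposition \ref{prop:dual-sequence} with $V_1 = k$ the trivial representation. In this case all elements act trivially on $k$, so the formula reduces to
\begin{equation*}
  \gamma_i(f)(g') = \sum_{h \in I \backslash K_i} f(hg') \qquad\text{for $f \in \ind_I^G(k)$ and $g' \in G$}.
\end{equation*}
We apply this to $f = \chara_{Ig}$ and evaluate at an arbitrary $g' \in G$. The summand $\chara_{Ig}(hg')$ is well defined on cosets $Ih$, since $Ig$ is left $I$-invariant. It equals $1$ exactly when $hg' \in Ig$, i.e.\ when $h \in Igg'^{-1}$, and is $0$ otherwise.

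First consider the case $g' \notin K_i g$. Then for every $h \in K_i$ we have $hg' \notin K_i g \supseteq Ig$, so every summand vanishes and $\gamma_i(\chara_{Ig})(g') = 0 = \chara_{K_i g}(g')$.

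Next consider the case $g' \in K_i g$. Write $g' = k g$ with $k \in K_i$. The condition $hg' \in Ig$ becomes $hk \in I$, i.e.\ $Ih = Ik^{-1}$. Hence exactly one coset $Ih \in I \backslash K_i$ contributes, and it contributes the value $1$. So $\gamma_i(\chara_{Ig})(g') = 1 = \chara_{K_i g}(g')$.

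Combining the two cases gives the equality of functions $\gamma_i(\chara_{Ig}) = \chara_{K_i g}$. There is no real obstacle in this argument. The only points requiring care are that the sum over $I \backslash K_i$ is finite and independent of the choice of representatives (since $K_i$ is compact and $I$ is open in it, and $f$ is left $I$-invariant), and that the sign and placement of $g$ in the coset conventions match those in the definition of $\chara_A$. The tree of Theorem \ref{thm:tree} is not needed here. It becomes relevant only afterwards, for interpreting these functions as edges and vertices.
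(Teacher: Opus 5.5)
Your proof is correct and matches the paper's: both evaluate $\gamma_i(\chara_{Ig})$ pointwise with the explicit formula from Proposition \ref{prop:dual-sequence}, and count the cosets $Ih \in I\backslash K_i$ with $hg' \in Ig$. There is exactly one such coset if $g' \in K_i g$ and none otherwise. Your explicit case split only spells out what the paper compresses into a single displayed computation.
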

\begin{proof}
Using the description of $\gamma_i$ in Prop.\ \ref{prop:dual-sequence} we compute
\begin{align*}
  \gamma_i(\chara_{Ig})(g_0) & = \sum_{h \in I \backslash K_i} h^{-1} \chara_{Ig}(hg_0) =  \sum_{h \in I \backslash K_i, hg_0 \in Ig} 1  
   = \begin{cases}
           1 & \text{if $gg_0^{-1} \in K_i$}, \\
           0 & \text{otherwise}
            \end{cases}   \\
           & = \chara_{K_i g}(g_0) \ .
\end{align*}
\end{proof}

\begin{proposition}\label{prop:MV-trivial}
   The map $\ind_I^G(k) \xrightarrow{(\gamma_1,\gamma_2)} \ind_{K_1}^G(k) \oplus \ind_{K_2}^G(k)$ is injective.
\end{proposition}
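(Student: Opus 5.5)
Via the bijections \eqref{f:edge} and \eqref{f:vertex}, $\ind_I^G(k)$ becomes the $k$-vector space with basis the edges of $\mathcal{T}$ (the functions $\chara_{Ig}$ for $Ig \in I\backslash G$), and $\ind_{K_1}^G(k) \oplus \ind_{K_2}^G(k)$ becomes the space with basis the vertices of $\mathcal{T}$. Indeed, since $I$ and $K_i$ are open, a compactly supported left-invariant function on $G$ is a finite linear combination of such characteristic functions. By Lemma \ref{lem:gamma-k}, $(\gamma_1,\gamma_2)$ sends the edge $g^{-1}e$ to the pair consisting of its two endpoints $g^{-1}v_1$ and $g^{-1}v_2$. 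So $(\gamma_1,\gamma_2)$ is the map $k[\mathrm{edges}(\mathcal{T})] \to k[\mathrm{vertices}(\mathcal{T})]$ sending an edge to the sum of its endpoints. (With the sign in Prop.\ \ref{prop:dual-sequence} it would be the difference, i.e.\ the simplicial boundary map, and the argument below is the same.) Moreover $\mathcal{T}$ is bipartite, with the parts being the two $G$-orbits of vertices, so no cancellation between the two components can occur.

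The claim therefore reduces to a combinatorial fact about trees: the boundary map $C_1(\mathcal{T};k) \to C_0(\mathcal{T};k)$ on finitely supported chains is injective. This is the statement $H_1(\mathcal{T};k)=0$ for a tree. I would prove it directly. Let $c = \sum_{\epsilon \in E} a_\epsilon\, \epsilon$ be a nonzero chain, where $E$ is the finite, nonempty set of edges with $a_\epsilon \neq 0$. The edges in $E$ span a finite subgraph of $\mathcal{T}$, which is a finite forest because $\mathcal{T}$ has no circuits. A nonempty finite forest has a leaf, i.e.\ a vertex $w$ incident to exactly one edge $\epsilon_0 \in E$. The coefficient of $w$ in the image of $c$ is then $\pm a_{\epsilon_0} \neq 0$, so the image is nonzero.

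The only step that needs care is making the identification precise. I have to check that $\chara_{K_1 g}$ and $\chara_{K_2 g'}$ land in different summands and therefore correspond to distinct vertices, which is built into \eqref{f:vertex} being a disjoint union. I also have to check that the endpoints of the edge $g^{-1}e$ are exactly $g^{-1}v_1$ and $g^{-1}v_2$, which holds because $G$ acts by graph automorphisms. Once these are in place, the leaf argument finishes the proof, and I do not anticipate any real obstacle.
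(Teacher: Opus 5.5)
Your proposal is correct and takes essentially the paper's approach. Both use Lemma \ref{lem:gamma-k} to identify $(\gamma_1,\gamma_2)$ with the map sending an edge of $\mathcal{T}$ to its two endpoints, and then show the coefficients vanish by finding a vertex met by exactly one edge of the finite support. The paper locates that vertex as the far endpoint of an edge at maximal distance from a fixed vertex $\mathbf{o}$ and inducts one edge at a time, while you quote directly that a nonempty finite forest has a leaf; this is the same argument.
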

\begin{proof}
Obviously $B := \{\chara_{Ig} : g \in I \backslash G\}$ is a $k$-basis of $\ind_I^G(k)$. Lemma \ref{lem:gamma-k} implies that $\gamma_i(B)$ is a $k$-basis of $\ind_{K_i}^G(k)$. The asserted injectivity therefore is equivalent to the claim that, for any finite subset $C \subseteq B$, the set $\{\gamma_1(b) + \gamma_2(b) : b \in C\}$ is linearly independent in $\ind_{K_1}^G(k) \oplus \ind_{K_2}^G(k)$. Suppose therefore that
\begin{equation*}
    0 = \sum_{b \in C} r_b(\gamma_1(b) + \gamma_2(b)) = \sum_{b \in C} r_b \gamma_1(b) + \sum_{b \in C} r_b \gamma_2(b)
\end{equation*}
for some $r_b \in k$. It follows that
\begin{equation}\label{f:equation}
  \sum_{b \in C} r_b \gamma_i(b) = 0  \qquad\text{for $i = 1, 2$}.
\end{equation}

It is now convenient to make the identifications \eqref{f:edge} and \eqref{f:vertex}. By Lemma \ref{lem:gamma-k} the maps $\gamma_1$ and $\gamma_2$ become the maps which send an edge to its two extremities. We fix a vertex \textbf{o} of $\mathcal{T}$. For any edge $b \in C$ there is a unique geodesic in $\mathcal{T}$ starting at \textbf{o} and having $b$ as its final edge. Let $v(b)$ denote the vertex of $b$ in which this geodesic ends. Now we start with a $b \in C$ such that the corresponding geodesic has maximal length. Then there cannot exist any edge in $B$ different from $b$ which has $v_b$ as a vertex. Therefore \eqref{f:equation} implies that $r_b = 0$. So we may remove $b$ from $C$ and repeat this argument. In this way it follows inductively that $r_b = 0$ for all $b \in C$.
\end{proof}

To deduce the general case we use the isomorphism in property c) above.

\begin{lemma}\label{lem:property-c}
   For any $V_1$ in $\Mod(G)$ and any $i = 1, 2$ the diagram
\begin{equation*}
  \xymatrix{
  \ind_I^G(V_1) \ar[d]_{\gamma_i}  & \ind_I^G(k) \otimes_k V_1 \ar[l]_-{\cong} \ar[d]^{\gamma_i \otimes \id} \\
  \ind_{K_i}(V_1) \ar[r]^-{\cong} & \ind_{K_i}^G(k) \otimes_k V_1 ,   } 
\end{equation*}
where the horizontal isomorphisms are the ones from property c), is commutative.
\end{lemma}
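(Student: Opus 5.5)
Since every map in the square is $k$-linear, it suffices to check commutativity on pure tensors $f \otimes v$ with $f \in \ind_I^G(k)$ and $v \in V_1$. The left-hand side $\ind_I^G(k)$ is spanned by the characteristic functions $\chara_{Ig}$, so by linearity we may take $f = \chara_{Ig}$. We compute both composites $\ind_I^G(k)\otimes_k V_1 \to \ind_{K_i}^G(V_1)$; equivalently we compare $\gamma_i(F_{f\otimes v})$ with $F_{\gamma_i(f)\otimes v}$, using the isomorphism of property c) for $K_i$ in place of the inverse direction.

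\emph{Via the right column and bottom.} By Lemma \ref{lem:gamma-k}, $(\gamma_i\otimes\id)(\chara_{Ig}\otimes v) = \chara_{K_ig}\otimes v$. Property c) sends this to the function $g_0 \mapsto \chara_{K_ig}(g_0)\, g_0 v$.

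\emph{Via the top and left column.} Property c) first sends $\chara_{Ig}\otimes v$ to $F(g_1) = \chara_{Ig}(g_1)\, g_1 v$. The explicit formula of Prop.\ \ref{prop:dual-sequence} then gives
\begin{equation*}
  \gamma_i(F)(g_0) = \sum_{h \in I\backslash K_i} h^{-1}\chara_{Ig}(hg_0)\, hg_0 v = \sum_{h \in I\backslash K_i,\ hg_0\in Ig} g_0 v .
\end{equation*}
In this sum the factors $h^{-1}$ and $h$ cancel because $V_1$ is a $G$-representation; this cancellation is the key point of the argument. As in the proof of Lemma \ref{lem:gamma-k}, there is exactly one coset $Ih$ with $hg_0 \in Ig$ when $g_0 \in K_ig$, and none otherwise. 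So the sum equals $\chara_{K_ig}(g_0)\, g_0 v$, matching the other composite.

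The only delicate points are bookkeeping. First, the horizontal arrows point in opposite directions, so the claim reduces to $\gamma_i \circ c_I = c_{K_i} \circ (\gamma_i\otimes\id)$, where $c_I$ and $c_{K_i}$ denote the isomorphisms of property c) for $I$ and $K_i$. Second, one must check that the summand $h^{-1}F(hg_0)$ is independent of the representative $h$ of $Ih$. Both points are routine, and I expect no real obstacle.
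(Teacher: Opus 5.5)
Your proof is correct and is essentially the paper's argument: a direct computation on pure tensors $f\otimes v$ whose only real content is the cancellation $h^{-1}(hg_0v)=g_0v$ inside $\gamma_i(F_{f\otimes v})$. The differences are cosmetic. You reduce to $f=\chara_{Ig}$ via Lemma \ref{lem:gamma-k} and compare the two composites inside $\ind_{K_i}^G(V_1)$ using the forward isomorphism. The paper instead keeps $f$ arbitrary, applies the inverse $F\mapsto\sum_{h}\chara_{K_ih}\otimes h^{-1}F(h)$, and recognizes $\gamma_i(f)$ through the formula of Prop.\ \ref{prop:dual-sequence}; that formula would also let you skip your reduction, since it gives $\gamma_i(F_{f\otimes v})(g_0)=\gamma_i(f)(g_0)\,g_0v$ directly.
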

\begin{proof}
Let $f \otimes v \in \ind_I^G(k) \otimes_k V_1$. Under the upper horizontal isomorphism it is mapped to $F_{f \otimes v}$. Recalling the formula for $\gamma_i$ in Prop.\ \ref{prop:dual-sequence} we compute
\begin{equation*}
  F_i(g) := \gamma_i(F_{f \otimes v})(g) = \sum_{\kappa \in I \backslash K_i} \kappa^{-1} F_{f \otimes v}(\kappa g) =
   \sum_{\kappa \in I \backslash K_i} \kappa^{-1} f(\kappa g) \kappa gv = \sum_{\kappa \in I \backslash K_i} f(\kappa g) gv   \ .
\end{equation*}
The lower horizontal isomorphism sends $F_i$ to $\sum_{h \in K_i \backslash G} \chara_{K_i h} \otimes h^{-1} F_i(h)$. We finally compute the latter to be equal to
\begin{align*}
   & \sum_{h \in K_i \backslash G} (\chara_{K_i h} \otimes \sum_{\kappa \in I \backslash K_i} h^{-1} f(\kappa h) hv) \\
   & = \sum_{h \in K_i \backslash G} (\chara_{K_i h} \otimes  \sum_{\kappa \in I \backslash K_i}f(\kappa h) v) \\
   & = (\sum_{h \in K_i \backslash G} \chara_{K_i h} \cdot (\sum_{\kappa \in I \backslash K_i}f(\kappa h))) \otimes   v \\
   & = (\sum_{h \in K_i \backslash G} \chara_{K_i h} \cdot \gamma_i(f)(h)) \otimes v  \\
   & = \gamma_i(f) \otimes v \ .
\end{align*}
\end{proof}

\begin{proposition}\label{prop:MV-general}
  For any $V_1$ in $\Mod(G)$ we have the short exact sequence
\begin{equation*}
  0 \longrightarrow \ind_I^G(V_1) \xrightarrow{(\gamma_1,-\gamma_2)} \ind_{K_1}^G(V_1) \oplus \ind_{K_2}^G(V_1) \xrightarrow{\pi_{G,K_1} + \pi_{G,K_2}} V_1 \longrightarrow 0 \ .
\end{equation*}
\end{proposition}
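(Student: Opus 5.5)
By Prop.\ \ref{prop:dual-sequence}, the sequence
\begin{equation*}
  \ind_I^G(V_1) \xrightarrow{(\gamma_1,-\gamma_2)} \ind_{K_1}^G(V_1) \oplus \ind_{K_2}^G(V_1) \xrightarrow{\pi_{G,K_1} + \pi_{G,K_2}} V_1 \longrightarrow 0
\end{equation*}
is already known to be exact. So only the injectivity of $(\gamma_1,-\gamma_2)$ on $\ind_I^G(V_1)$ remains to be shown. Note that $(\gamma_1,-\gamma_2)$ is injective if and only if $(\gamma_1,\gamma_2)$ is. Indeed, composing with the automorphism $(x,y)\mapsto (x,-y)$ of the target turns one map into the other.

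The plan is to reduce to the trivial representation using property c) and Lemma \ref{lem:property-c}. By that lemma, for each $i$ the map $\gamma_i$ on $\ind_I^G(V_1)$ corresponds, under the isomorphisms of property c), to $\gamma_i \otimes \id_{V_1}$ on $\ind_I^G(k)\otimes_k V_1$. Taking both $i=1,2$ together, the map $(\gamma_1,\gamma_2)\colon \ind_I^G(V_1) \to \ind_{K_1}^G(V_1)\oplus \ind_{K_2}^G(V_1)$ is identified with
\begin{equation*}
  (\gamma_1,\gamma_2)\otimes \id_{V_1} : \ind_I^G(k)\otimes_k V_1 \longrightarrow \bigl(\ind_{K_1}^G(k)\oplus \ind_{K_2}^G(k)\bigr)\otimes_k V_1 .
\end{equation*}
Here we use that $\otimes_k V_1$ commutes with finite direct sums, so the target is $(\ind_{K_1}^G(k)\otimes_k V_1)\oplus(\ind_{K_2}^G(k)\otimes_k V_1)$.

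By Prop.\ \ref{prop:MV-trivial}, the map $(\gamma_1,\gamma_2)$ on $\ind_I^G(k)$ is an injective $k$-linear map. Tensoring over the field $k$ is exact, so $(\gamma_1,\gamma_2)\otimes\id_{V_1}$ is injective. Transporting back through the isomorphisms of property c) shows that $(\gamma_1,\gamma_2)$, and hence $(\gamma_1,-\gamma_2)$, is injective on $\ind_I^G(V_1)$. Combined with the exactness from Prop.\ \ref{prop:dual-sequence}, this gives the asserted short exact sequence.

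The only step needing care is checking that the identification is compatible with both components at once, i.e.\ that the direct sum of the two squares in Lemma \ref{lem:property-c} commutes. This holds because the upper horizontal isomorphism is the same for $i=1,2$. No further obstacle is expected.
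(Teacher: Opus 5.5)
Your proposal is correct and is the paper's own argument. The paper's proof is literally ``combine Prop.~\ref{prop:dual-sequence}, Prop.~\ref{prop:MV-trivial}, and Lemma~\ref{lem:property-c},'' and you carry out that combination, making explicit the details the paper leaves implicit: the sign change $(\gamma_1,-\gamma_2)$ versus $(\gamma_1,\gamma_2)$, the compatibility of the two squares, and the exactness of $-\otimes_k V_1$.
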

\begin{proof}
Combine Prop.\ \ref{prop:dual-sequence}, Prop.\ \ref{prop:MV-trivial}, and Lemma \ref{lem:property-c}.
\end{proof}

\begin{theorem}\label{thm:MV2}
  For any $V_1, V_2$ in $\Mod(G)$ we have the bi-functorial long exact sequence
\begin{align*}
  0 \rightarrow & \Hom_G(V_1,V_2) \rightarrow \Hom_{K_1}(V_1,V_2) \times \Hom_{K_2}(V_1,V_2) \rightarrow \Hom_I(V_1,V_2) \rightarrow  \\
        & \Ext^1_G(V_1,V_2) \rightarrow  ...    \\
        & \ldots \\
        & \ldots  \\
        & \Ext^j_G(V_1,V_2) \rightarrow \Ext^j_{K_1}(V_1,V_2) \times \Ext^j_{K_2}(V_1,V_2) \rightarrow \Ext^j_I(V_1,V_2) \rightarrow \qquad\qquad\qquad \\
        & \Ext^{j+1}_G(V_1,V_2) \rightarrow  ...  
\end{align*}
\end{theorem}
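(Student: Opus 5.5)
The plan is to apply $\Ext^\ast_G(-,V_2)$ to the short exact sequence of Prop.\ \ref{prop:MV-general},
\begin{equation*}
  0 \longrightarrow \ind_I^G(V_1) \longrightarrow \ind_{K_1}^G(V_1) \oplus \ind_{K_2}^G(V_1) \longrightarrow V_1 \longrightarrow 0 ,
\end{equation*}
and identify the resulting terms. The associated long exact sequence of $\Ext$-groups in the Grothendieck abelian category $\Mod(G)$ reads
\begin{equation*}
  \ldots \rightarrow \Ext^j_G(V_1,V_2) \rightarrow \Ext^j_G(\ind_{K_1}^G(V_1),V_2) \times \Ext^j_G(\ind_{K_2}^G(V_1),V_2) \rightarrow \Ext^j_G(\ind_I^G(V_1),V_2) \rightarrow \Ext^{j+1}_G(V_1,V_2) \rightarrow \ldots
\end{equation*}
In degree $0$ this is exactly \eqref{f:Hom-ind-sequence}, by Prop.\ \ref{prop:dual-sequence}, and that sequence was already identified with \eqref{f:basic-sequence}.

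The key step is a Shapiro lemma: for $K \subseteq G$ compact open and $U$ in $\Mod(K)$, we want a natural isomorphism $\Ext^j_G(\ind_K^G(U),V_2) \cong \Ext^j_K(U,V_2)$ for all $j$, extending the Frobenius isomorphism $\mathcal{R}_{U,V_2}$. To get it, take an injective resolution $V_2 \to J^\bullet$ in $\Mod(G)$. Since $\ind_K^G$ is exact and left adjoint to $\res_K^G$ (properties a), b)), the functor $\res^G_K$ preserves injectives. Hence $\res^G_K(J^\bullet)$ is an injective resolution of $V_2$ in $\Mod(K)$, and the adjunction gives an isomorphism of complexes $\Hom_G(\ind_K^G(U),J^\bullet) \cong \Hom_K(U,J^\bullet)$. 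Taking cohomology yields the isomorphism, naturally in $U$ and $V_2$. Applying it with $(K,U) = (K_i,V_1)$ and $(I,V_1)$ identifies the terms. For the $I$-term we also use the transitivity isomorphism $\ind_I^G \cong \ind_{K_i}^G \ind_I^{K_i}$ together with the Shapiro isomorphism for $I \subseteq K_i$ inside $\Mod(K_i)$.

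Next we identify the maps in all degrees, using the injective resolution above. At the level of complexes, the maps are $\Hom_G(\pi_{G,K_i},J^n)$ and $\Hom_G(\gamma_i,J^n)$. By diagram \eqref{diag:R} and the commutative diagram in the proof of Prop.\ \ref{prop:dual-sequence}, each applied with $V_2$ replaced by $J^n$, these correspond under the Frobenius isomorphisms to the restrictions $\res^G_{K_i}$ and $\res^{K_i}_I$. Both diagrams are natural in the second variable, so they are compatible with the differentials of $J^\bullet$. Passing to cohomology, the maps in the long exact sequence become the restriction maps $\Ext^j_G \to \Ext^j_{K_i}$ and, up to the sign $-1$ on the second factor, $\Ext^j_{K_i} \to \Ext^j_I$. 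This is the asserted sequence.

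Bifunctoriality comes from two facts. In $V_2$, everything is natural by construction. In $V_1$, the short exact sequence of Prop.\ \ref{prop:MV-general} is natural, since $\gamma_i$ and $\pi_{G,K_i}$ are natural transformations, and the connecting homomorphism is natural with respect to morphisms of short exact sequences.

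The main obstacle is the degreewise identification of the maps. We must check that the Shapiro isomorphisms, computed via $\res^G_K J^\bullet$, are compatible with restriction maps between $\Ext$-groups. This includes the intermediate case $I \subseteq K_i$: there $\res^{K_i}_I$ on $\Ext$ is computed through $\res^G_{K_i}J^\bullet$, which is injective in $\Mod(K_i)$ and restricts to an injective resolution in $\Mod(I)$. I would handle this by fixing one resolution $J^\bullet$ throughout, using that all restrictions of it remain injective resolutions.
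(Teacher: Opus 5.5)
Your proposal is correct and is the paper's own proof: you apply $\Hom_G(-,V_2)$ to the short exact sequence of Prop.~\ref{prop:MV-general} and identify degree $0$ via Prop.~\ref{prop:dual-sequence}. You then use that $\res$ preserves injectives, since it has the exact left adjoint $\ind$, so Frobenius reciprocity passes to all $\Ext$-degrees. Your degreewise identification of the maps as restriction maps, using one fixed injective resolution $J^\bullet$ and the naturality of \eqref{diag:R} and of the diagram in the proof of Prop.~\ref{prop:dual-sequence}, just spells out what the paper leaves implicit.
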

\begin{proof}
Applying the Hom-functor $\Hom_G(-,V_2)$ to the short exact sequence in Prop.\ \ref{prop:MV-general} results, by Prop.\ \ref{prop:dual-sequence}, in the long exact Ext-sequence
\begin{align*}
  0 \rightarrow & \Hom_G(V_1,V_2) \rightarrow \Hom_G(\ind_{K_1}^G(V_1),V_2) \times \Hom_G(\ind_{K_1}^G(V_1),V_2) \rightarrow \Hom_G(\ind_I^G(V_1),V_2)  \\
        & \ldots  \\
     \rightarrow    & \Ext^j_G(V_1,V_2) \rightarrow \Ext^j_G(\ind_{K_1}^G(V_1),V_2) \times \Ext^j_G(\ind_{K_1}^G(V_1),V_2) \rightarrow \Ext^j_G(\ind_I^G(V_1),V_2) \rightarrow  ...  
\end{align*}
But the restriction functors having an exact left adjoint preserve injective objects. Hence Frobenius reciprocity holds for the Ext-groups as well, and we obtain the asserted long exact sequence.
\end{proof}

\section{Local coherence}\label{sec:loc-coh}

The category $\Mod(G)$ is locally of finite type, since any object obviously is the direct limit of its finitely generated subobjects. We recall that an object $V$ in $\Mod(G)$ is called finitely presented if the functor $\Hom_G(V,-)$ commutes with direct limits.

\begin{lemma}\label{lem:generators}
   Let $\mathfrak{U}$ be a fundamental system of compact open subgroups of $G$. Then the representations $\ind_U^G(k)$, for $U \in \mathfrak{U}$, form a set of generators of $\Mod(G)$.
\end{lemma}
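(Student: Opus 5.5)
To prove that the $\ind_U^G(k)$, $U \in \mathfrak{U}$, generate $\Mod(G)$, I would use the standard criterion. A set of objects $\{P_U\}$ generates a Grothendieck category if, for every object $V$ and every proper subobject $V' \subsetneq V$, some morphism $P_U \to V$ fails to factor through $V'$. Equivalently, every $V$ is a quotient of a direct sum of copies of the $P_U$. I will establish the second form directly, by showing that every vector of $V$ lies in the image of a suitable morphism $\ind_U^G(k) \to V$.

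The key tool is Frobenius reciprocity, property b) of compact induction with $V = k$ the trivial $U$-representation. It gives
\begin{equation*}
  \Hom_G(\ind_U^G(k), V) \cong \Hom_U(k, V) = V^U ,
\end{equation*}
and explicitly a vector $v \in V^U$ corresponds to the morphism $\mathcal{R}_{k,V}(1 \mapsto v)$, which sends $\chara_U = \iota(1)$ to $v$. So the image of this morphism contains $v$.

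Now let $V$ be in $\Mod(G)$ and $v \in V$. By smoothness the stabilizer of $v$ is open in $G$. Since $\mathfrak{U}$ is a fundamental system of neighbourhoods of $1$ consisting of compact open subgroups, some $U \in \mathfrak{U}$ is contained in that stabilizer, so $v \in V^U$. The morphism $\varphi_v : \ind_U^G(k) \to V$ just described hits $v$. Summing over all $v \in V$, the induced map
\begin{equation*}
  \bigoplus_{v \in V} \ind_{U_v}^G(k) \longrightarrow V
\end{equation*}
is surjective, which proves the claim. For the subobject formulation: given $V' \subsetneq V$, pick $v \in V \setminus V'$; then $\varphi_v$ does not factor through $V'$.

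There is no real obstacle here. The only points needing care are that the adjunction morphism indeed sends $\chara_U$ to $v$, which is the formula $\mathcal{R}(S)(\iota(u)) = S(u)$ from Section \ref{sec:compact-ind}, and that a fundamental system of compact open subgroups of the locally profinite group $G$ exists, which is implicit in the hypothesis.
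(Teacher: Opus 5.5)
Your proof is correct and is essentially the paper's argument: both rest on $\Hom_G(\ind_U^G(k),-) = (-)^U$ together with smoothness, which gives some $U \in \mathfrak{U}$ fixing a chosen vector. The only difference is the form of the generator criterion. The paper uses the version that every nonzero $\alpha : V \to V'$ induces a nonzero map $\alpha^U : V^U \to V'^U$ for a suitable $U$, while you show that $V$ is a quotient of a direct sum of copies of the $\ind_U^G(k)$; these two criteria are equivalent.
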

\begin{proof}
Let $\alpha : V \rightarrow V'$ be a nonzero map and choose a $v \in V$ such that $\alpha(v) \neq 0$ as well as a $U \in \mathfrak{U}$ which fixes $v$. Then $\alpha^U : V^U \rightarrow V'^U$ is nonzero. It remains to note that $\Hom_{\Mod(G)}(\ind_U^G(k),-) = (-)^U$.
\end{proof}

\begin{lemma}\label{lem:ind-fp}
   Let $U \subseteq G$ be a compact open subgroup and $M$ be a finite dimensional representation in $\Mod(U)$. Then $\ind_U^G(M)$ is finitely presented in $\Mod(G)$.
\end{lemma}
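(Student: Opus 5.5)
By Frobenius reciprocity (property b)) we have the natural isomorphism
\begin{equation*}
  \Hom_G(\ind_U^G(M), -) \cong \Hom_U(M, \res^G_U(-)) .
\end{equation*}
Since the restriction functor $\res^G_U$ commutes with direct limits (these are computed on underlying vector spaces in both categories), it suffices to show that $\Hom_U(M,-)$ commutes with direct limits in $\Mod(U)$. In other words, the plan is to reduce to showing that a finite dimensional $M$ is finitely presented in $\Mod(U)$.

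For this reduction I use the compactness of $U$. Since $M$ is finite dimensional and smooth, the stabilizers of a basis of $M$ are open, so their intersection contains an open normal subgroup $N \subseteq U$ acting trivially on $M$; here normality comes from intersecting the finitely many $U$-conjugates. Then
\begin{equation*}
  \Hom_U(M,W) = \Hom_{U/N}(M, W^N)
\end{equation*}
for any $W$ in $\Mod(U)$. Taking $N$-invariants commutes with direct limits, even filtered colimits of smooth representations. Indeed, $W^N = \Hom_U(\ind_N^U(k), W)$, and $\ind_N^U(k)$ is the finite dimensional permutation module $k[U/N]$, which is finitely generated. One can also check this directly: an element of $\varinjlim W_\alpha$ fixed by the finitely many generators of the finite group $U/N$ already comes from an $N$-fixed element at some stage, because each of the finitely many relations $gw=w$ holds at a later stage.

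This reduces the claim to the module category of the finite dimensional algebra $k[U/N]$. There $M$ is a finitely generated module over a noetherian ring, hence finitely presented, so $\Hom_{k[U/N]}(M,-)$ commutes with filtered colimits. Concretely, a presentation $k[U/N]^a \to k[U/N]^b \to M \to 0$ yields an exact sequence
\begin{equation*}
  0 \to \Hom(M,-) \to (-)^b \to (-)^a ,
\end{equation*}
and filtered colimits are exact. The same argument handles arbitrary direct limits once one notes that direct sums are also preserved, since $M$ is finitely generated.

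The only point requiring some care is the interplay of direct limits in $\Mod(G)$ with restriction and with $N$-invariants. The main obstacle, mild as it is, is checking that colimits in $\Mod(G)$ are computed in vector spaces (smoothness is preserved), so that $\res^G_U$ and $(-)^N$ commute with them. Everything else follows from Frobenius reciprocity and finite dimensionality.
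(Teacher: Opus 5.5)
Your reduction is in substance the paper's. Frobenius reciprocity turns the claim into the statement that $\Hom_U(M,-)$ commutes with direct limits. The only non-formal input is then that invariants under a compact open subgroup commute with direct limits of smooth representations. The paper writes $\Hom_U(M,V)=(M^*\otimes_k V)^U$ and simply quotes that profinite group cohomology commutes with direct limits. Your detour through $U/N$ and finite presentation over $k[U/N]$ is correct but unnecessary, since the fact you need for $N$ holds equally for $U$.

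\textbf{The gap.} Your justification of that key input does not work, for two reasons.

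First, finite generation of $k[U/N]=\ind_N^U(k)$ in $\Mod(U)$ only gives injectivity of $\varinjlim \Hom_U(k[U/N],W_\alpha)\to\Hom_U(k[U/N],\varinjlim W_\alpha)$. Surjectivity would need finite presentation of $\ind_N^U(k)$, and that is itself an instance of the lemma you are proving.

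Second, your ``direct check'' is off target. Invariance under generators of the finite group $U/N$ is not what is at stake. The real problem is that an $N$-fixed element $w\in\varinjlim W_\alpha$ has a lift $w_\alpha\in W_\alpha$ that need not be $N$-fixed. Moreover, $N$ is in general an infinite profinite group, which is not finitely generated as an abstract group, so no finite set of relations can be extracted from $N$ itself.

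\textbf{The fix.} The missing idea is to use the smoothness of $W_\alpha$:
\begin{itemize}
\item The lift $w_\alpha$ is fixed by an open subgroup $N'\subseteq N$, which has finite index because $N$ is compact.
\item Let $g_1,\ldots,g_m$ represent the cosets in $N/N'$. Each $g_jw_\alpha-w_\alpha$ maps to $g_jw-w=0$ in the limit, so all of them vanish in a single $W_\beta$ with $\beta\ge\alpha$.
\item The image of $w_\alpha$ in $W_\beta$ is then fixed by $N'$ and by every $g_j$, hence by $N$.
\item Injectivity holds because filtered colimits of vector spaces are exact.
\end{itemize}
Together these show $(\varinjlim W_\alpha)^N=\varinjlim W_\alpha^N$.

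Smoothness really is indispensable here. For abstract $k[N]$-modules with $N$ infinite profinite, taking $N$-invariants does not commute with direct limits, since $k$ is then not finitely presented over $k[N]$. With this repair the rest of your argument goes through.
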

\begin{proof}
We have
\begin{equation*}
 \Hom_{\Mod(G)}(\ind_U^G(M),V) = \Hom_{\Mod(U)}(M,V) = (M^* \otimes_k V)^U \ .
\end{equation*}
But the cohomology of profinite groups commutes with direct limits.
\end{proof}

These two lemmas, in particular, show that $\Mod(G)$ is locally finitely presented. Let $\mathcal{FP}(G)$ denote the class of finitely presented objects in $\Mod(G)$. We further let $\mathcal{FP}$-inj$(G)$ denote the class of object $A$ in $\Mod(G)$ for which $\Ext_G^1(F,A) = 0$ for any $F$ in $\mathcal{FP}(G)$.

\begin{lemma}\label{lem:profinite}
   For a profinite group $H$ then we have:
\begin{itemize}
  \item[i.] An object in $\Mod(H)$ is finitely presented if and only if it is finite dimensional;
  \item[ii.] $\mathcal{FP}$-inj$(H)$ is the class of all injective objects in $\Mod(H)$.
\end{itemize}
\end{lemma}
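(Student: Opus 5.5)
For i., I will first show that finite dimensional objects are finitely presented, and then prove the converse. Let $M$ in $\Mod(H)$ be finite dimensional. Since $M$ is smooth and finite dimensional, there is an open normal subgroup $U \subseteq H$ acting trivially on $M$. Then
\[
\Hom_H(M,V) = \Hom_{H/U}(M,V^U).
\]
Both $(-)^U$ and $\Hom_{H/U}(M,-)$, for a finite group $H/U$ and a finite dimensional $M$, commute with direct limits. Alternatively this is Lemma \ref{lem:ind-fp} applied with $U = H$.

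For the converse, let $V$ be finitely presented. Write $V = \varinjlim V_\alpha$ over its finite dimensional subrepresentations; smoothness and compactness make each orbit $Hv$ finite, so these exhaust $V$. The identity of $V$ then factors through some $V_\alpha$. Hence $V = V_\alpha$ is finite dimensional.

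For ii., injective objects obviously lie in $\mathcal{FP}$-inj$(H)$. Conversely, let $A$ satisfy $\Ext^1_H(F,A)=0$ for all finite dimensional $F$. I will use Baer's criterion for the Grothendieck category $\Mod(H)$ with generators $\ind_U^H(k)$, $U$ open, from Lemma \ref{lem:generators}. It suffices to show that every map $W \to A$ from a subobject $W \subseteq \ind_U^H(k)$ extends to $\ind_U^H(k)$. Since $H$ is compact, $\ind_U^H(k) = \ind_U^H(\res)$ is finite dimensional, because $U\backslash H$ is finite. So $W$ and the quotient $\ind_U^H(k)/W$ are finite dimensional, and hence finitely presented by i. The long exact sequence associated with
\[
0 \to W \to \ind_U^H(k) \to \ind_U^H(k)/W \to 0
\]
gives the surjectivity of $\Hom_H(\ind_U^H(k),A)\to\Hom_H(W,A)$ from $\Ext^1_H(\ind_U^H(k)/W,A)=0$.

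The step I expect to need care is the validity of Baer's criterion with respect to a generating set in a Grothendieck category. This is standard: the criterion is tested on subobjects of generators, via a Zorn's lemma argument on partial extensions, where any $V' \subsetneq V$ admits a map from some generator not landing in $V'$. If one prefers to avoid it, I would note that every object of $\Mod(H)$ is a union of finite dimensional ones. One then extends maps along $V' \subseteq V$ by transfinite induction, adding one finite dimensional subobject at a time and using $\Ext^1$-vanishing for the finite dimensional quotient $(V'+F)/V' \cong F/(F\cap V')$ at each step. Limit stages are handled by taking unions.
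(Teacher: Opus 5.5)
Your proof is correct, and it is more complete than what the paper offers: the paper leaves (i) as an easy exercise and for (ii) simply refers to Remark 5.6.2 of \cite{BGP}. For (i), your two directions are presumably the intended exercise. You reduce to invariants of the finite quotient $H/U$ and note that $(-)^U$ commutes with filtered colimits. Conversely, you factor $\id_V$ through a finite dimensional member of the directed union of finite dimensional subrepresentations.

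For (ii) you replace the citation by a direct Baer-criterion argument. The essential input is that the generators $\ind_U^H(k)$ are finite dimensional because $U\backslash H$ is finite. So $\Mod(H)$ is locally noetherian, and every quotient $\ind_U^H(k)/W$ is finitely presented by (i). Your claim that Baer's criterion may be tested on each generator separately is right. Let $(V'',f'')$ be a maximal partial extension and let $g\colon \ind_U^H(k)\to V$ not land in $V''$. Then $V''+g(\ind_U^H(k))$ is the pushout of $V''\leftarrow g^{-1}(V'')\rightarrow \ind_U^H(k)$. Hence any extension of $f''\circ g|_{g^{-1}(V'')}$ to $\ind_U^H(k)$ enlarges $(V'',f'')$.

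Your route buys self-containedness and shows clearly where compactness of $H$ enters. The citation buys brevity and places the statement inside the general framework of \cite{BGP}, which the paper invokes anyway for Thm.~\ref{thm:loc-coh2}. Two minor points: only the finite presentation of the quotient $\ind_U^H(k)/W$ is needed, not that of $W$, and ``$\ind_U^H(\res)$'' should read $\ind_U^H(k)$.
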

\begin{proof}
i. This is an easy exercise. ii. See \cite{BGP} Remark 5.6.2.
\end{proof}

\begin{lemma}\label{lem:ind-res}
   For a compact open subgroup $K \subseteq G$ we have:
\begin{itemize}
  \item[i.] The compact induction functor $\ind_K^G$ respects the classes $\mathcal{FP}$;
  \item[ii.] The restriction functor $\res_K^G$ respects the classes $\mathcal{FP}$-inj.
\end{itemize}
\end{lemma}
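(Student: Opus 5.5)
For i., I will use that $\ind_K^G$ is left adjoint to $\res_K^G$, that $\res_K^G$ commutes with direct limits, and that $\ind_K^G$ is exact. If $F$ is finitely presented in $\Mod(K)$, then for any direct system $(V_\alpha)$ in $\Mod(G)$ I obtain
\begin{equation*}
  \Hom_G(\ind_K^G(F), \varinjlim V_\alpha) = \Hom_K(F, \varinjlim \res_K^G V_\alpha) = \varinjlim \Hom_K(F, V_\alpha) = \varinjlim \Hom_G(\ind_K^G(F), V_\alpha).
\end{equation*}
The first equality is Frobenius reciprocity. The second uses that direct limits in $\Mod(G)$ are computed on underlying vector spaces, so restriction commutes with them, together with finite presentation of $F$. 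I also need to check that this identification is the canonical comparison map. Alternatively, by Lemma \ref{lem:profinite} i., $F$ is finite dimensional, so Lemma \ref{lem:ind-fp} applies directly. I will give the adjunction argument, since it is cleaner.

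For ii., let $A \in \mathcal{FP}$-inj$(G)$. By Lemma \ref{lem:profinite} ii., it suffices to show that $\res_K^G(A)$ is injective in $\Mod(K)$. Equivalently, $\Ext^1_K(F, A) = 0$ for every finite dimensional $F$ in $\Mod(K)$, which by Lemma \ref{lem:profinite} i. are exactly the finitely presented objects. The key step is the Shapiro-type isomorphism
\begin{equation*}
  \Ext^1_K(F, \res_K^G A) \cong \Ext^1_G(\ind_K^G F, A),
\end{equation*}
which holds because $\ind_K^G$ is exact and left adjoint to $\res_K^G$. Consequently $\res_K^G$ preserves injectives, and since it is exact, an injective resolution of $A$ in $\Mod(G)$ restricts to one in $\Mod(K)$; this is the argument already used in the proof of Theorem \ref{thm:MV2}. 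The right-hand side then vanishes by part i. and the assumption on $A$.

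The main subtle point is applying Lemma \ref{lem:profinite} ii. correctly. Injectivity in $\Mod(K)$ must be detected by vanishing of $\Ext^1$ against finite dimensional objects alone. I take this from the cited remark in \cite{BGP}: every smooth $K$-representation is the union of its finite dimensional subrepresentations, and a Baer-type criterion applies. Beyond that, the remaining work is bookkeeping with the adjunction.
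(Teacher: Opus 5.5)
Your proof is correct. Part ii.\ is the paper's argument, and part i.\ uses a slightly different route.

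In part ii.\ you argue as the paper does: for $M$ in $\mathcal{FP}(K)$ one has $\Ext^1_K(M,\res_K^G A)\cong\Ext^1_G(\ind_K^G M, A)=0$, by i.\ and the hypothesis on $A$. By definition this already says $\res_K^G(A)\in\mathcal{FP}$-inj$(K)$. Your detour through injectivity is therefore not needed, and neither are Lemma \ref{lem:profinite}.ii and the Baer-type criterion. That lemma only becomes relevant later, in Proposition \ref{prop:inj-dim}. So the ``main subtle point'' you flag is not actually part of this proof.

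For part i., the paper takes $M\in\mathcal{FP}(K)$ and notes it is finite dimensional by Lemma \ref{lem:profinite}.i. It then cites Lemma \ref{lem:ind-fp}, whose proof rests on $\Hom_U(M,V)=(M^*\otimes_k V)^U$ and on invariants of profinite groups commuting with direct limits. You mention this route but instead use a purely formal fact: a left adjoint whose right adjoint commutes with direct limits preserves finitely presented objects. Here restriction commutes with direct limits because colimits in $\Mod(G)$ are computed on underlying vector spaces.

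Your version never needs to identify $\mathcal{FP}(K)$ with the finite dimensional representations, which the paper leaves as an exercise. It uses nothing about $K$ beyond the adjunction. The paper's version is a one-line citation of lemmas it needs anyway. The compatibility with the canonical comparison map that you say you must check follows from the naturality of the Frobenius isomorphism in the second variable.
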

\begin{proof}
i. This is immediate from Lemma \ref{lem:profinite}.i and Lemma \ref{lem:ind-fp}. ii. This follows from i. and the adjunction
\begin{equation*}
  \Ext_G^1(\ind_K^G(M),V) \cong \Ext_K^1(M,\res_K^G(V)) \ .
\end{equation*}
\end{proof}

For the remaining results we \textbf{assume} (A) again.

\begin{proposition}\label{prop:inj-dim}
   For any $V_1$ in $\Mod(G)$ and $V_2$ in $\mathcal{FP}$-inj$(G)$ we have
\begin{equation*}
  \Ext_G^j(V_1,V_2) = 0  \qquad\text{for any $j \geq 2$}
\end{equation*}
(i.e., the objects in $\mathcal{FP}$-inj$(G)$ have injective dimension $\leq 1$).
\end{proposition}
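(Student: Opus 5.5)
Use the Mayer--Vietoris sequence of Theorem \ref{thm:MV2}. Its segment around degree $j$ reads
\begin{equation*}
  \Ext^{j-1}_{K_1}(V_1,V_2) \times \Ext^{j-1}_{K_2}(V_1,V_2) \rightarrow \Ext^{j-1}_I(V_1,V_2) \rightarrow \Ext^j_G(V_1,V_2) \rightarrow \Ext^j_{K_1}(V_1,V_2) \times \Ext^j_{K_2}(V_1,V_2) ,
\end{equation*}
where the $K_i$- and $I$-Ext groups are computed in $\Mod(K_i)$ and $\Mod(I)$ after restriction. It therefore suffices to show that $\Ext^{j}_{H}(V_1,V_2)=0$ for all $j\geq 1$ and $H\in\{K_1,K_2,I\}$. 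Given this, for $j \ge 2$ both outer terms of the displayed segment vanish, since $j-1 \ge 1$ and $j \ge 1$. Exactness then forces $\Ext^j_G(V_1,V_2)=0$.

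To get this vanishing, I take $V_2$ in $\mathcal{FP}$-inj$(G)$ and apply Lemma \ref{lem:ind-res}.ii with $K = K_1, K_2, I$. All three are compact open subgroups of $G$, so $\res^G_H(V_2)$ lies in $\mathcal{FP}$-inj$(H)$. Each $H$ is profinite, being compact and open in the locally profinite group $G$. Lemma \ref{lem:profinite}.ii then says $\res^G_H(V_2)$ is an injective object of $\Mod(H)$. Hence $\Ext^j_H(V_1,V_2)=0$ for every $j\geq1$ and every $V_1$.

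One point needs care: the groups $\Ext^j_{K_i}(V_1,V_2)$ in Theorem \ref{thm:MV2} must really be Ext groups in $\Mod(K_i)$ of the restrictions. This is how they arose in the proof of Theorem \ref{thm:MV2}, via Frobenius reciprocity for Ext, which rests on $\res$ preserving injectives. So the injectivity of the restriction gives exactly the vanishing we need.

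I expect no serious obstacle; the substantive input is Lemma \ref{lem:profinite}.ii (from \cite{BGP}), which we take as given. The rest is bookkeeping in the long exact sequence. As a byproduct, for $j=1$ the same argument shows that $\Ext^1_G(V_1,V_2)$ is the cokernel of $\Hom_{K_1}\times\Hom_{K_2}\to\Hom_I$, though this is not needed here.
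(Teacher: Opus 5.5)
Your proposal is correct and matches the paper's proof: it takes the segment $\Ext^{j-1}_I(V_1,V_2) \rightarrow \Ext^j_G(V_1,V_2) \rightarrow \Ext^j_{K_1}(V_1,V_2) \oplus \Ext^j_{K_2}(V_1,V_2)$ of Theorem \ref{thm:MV2}. It then kills the outer terms for $j \geq 2$ using Lemmas \ref{lem:ind-res}.ii and \ref{lem:profinite}.ii, exactly as you do.
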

\begin{proof}
Theorem \ref{thm:MV2} gives us the exact sequence
\begin{equation*}
  \Ext_I^{j-1}(V_1,V_2) \longrightarrow \Ext_G^j(V_1,V_2) \longrightarrow \Ext_{K_1}^j(V_1,V_2) \oplus \Ext_{K_2}^j(V_1,V_2) \ .
\end{equation*}
By Lemmas \ref{lem:ind-res}.ii and \ref{lem:profinite}.ii the outer terms are zero for $j \geq 2$.
\end{proof}

\begin{corollary}\label{cor:coker}
   The class $\mathcal{FP}$-inj$(G)$ is closed under taking cokernels of monomorphisms.
\end{corollary}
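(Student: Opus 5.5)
Let $0 \to A \to B \to C \to 0$ be a short exact sequence in $\Mod(G)$ with $A$ and $B$ in $\mathcal{FP}$-inj$(G)$. We have to show that $C$ lies in $\mathcal{FP}$-inj$(G)$, i.e.\ that $\Ext^1_G(F,C) = 0$ for every $F$ in $\mathcal{FP}(G)$. The plan is to use the long exact Ext-sequence obtained by applying $\Hom_G(F,-)$ to this short exact sequence, and then to kill the relevant terms with Prop.\ \ref{prop:inj-dim}.

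First, for fixed $F \in \mathcal{FP}(G)$, the long exact sequence contains the segment
\begin{equation*}
  \Ext^1_G(F,B) \longrightarrow \Ext^1_G(F,C) \longrightarrow \Ext^2_G(F,A) \ .
\end{equation*}
The left term vanishes because $B$ is in $\mathcal{FP}$-inj$(G)$ and $F$ is finitely presented. The right term vanishes by Prop.\ \ref{prop:inj-dim}, applied with $V_1 = F$ (an arbitrary object suffices) and $V_2 = A \in \mathcal{FP}$-inj$(G)$, for $j = 2$. Exactness then forces $\Ext^1_G(F,C) = 0$. Since $F$ was an arbitrary finitely presented object, $C$ belongs to $\mathcal{FP}$-inj$(G)$.

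There is no real obstacle here. The only substantive input is the vanishing of $\Ext^2_G(-,A)$ for $A$ in $\mathcal{FP}$-inj$(G)$. That vanishing is exactly Prop.\ \ref{prop:inj-dim}, which in turn rests on the Mayer--Vietoris sequence of Theorem \ref{thm:MV2}. The one point to check is that the long exact Ext-sequence in the second variable is available in the Grothendieck category $\Mod(G)$. This holds because $\Mod(G)$ has enough injectives, so $\Ext^\ast_G(F,-)$ is the right derived functor of $\Hom_G(F,-)$.

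Note that the hypothesis that $B$ lies in $\mathcal{FP}$-inj$(G)$ is used, but no hypothesis on $C$ beyond being a cokernel is needed. Also, the finite presentation of $F$ is used only for the vanishing of $\Ext^1_G(F,B)$.
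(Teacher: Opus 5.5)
Your proof is correct and is essentially the paper's own argument. You use the segment $\Ext^1_G(F,B) \to \Ext^1_G(F,C) \to \Ext^2_G(F,A)$ of the long exact sequence, killing the left term by the hypothesis on $B$ and the right term by Prop.~\ref{prop:inj-dim} applied to $A$.
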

\begin{proof}
Let $0 \rightarrow A \rightarrow B \rightarrow C \rightarrow 0$ be an exact sequence in $\Mod(G)$ with $A$ and $B$ being contained in $\mathcal{FP}$-inj$(G)$. For any $F$ in $\mathcal{FP}(G)$ we have the exact sequence
\begin{equation*}
    \Ext_G^1(F,B) \longrightarrow \Ext_G^1(F,C) \longrightarrow \Ext_G^2(F,A) \ .
\end{equation*}
The first term is zero by assumption and the last one by Prop.\ \ref{prop:inj-dim}. Hence $\Ext_G^1(F,C) = 0$.
\end{proof}

We recall that a locally finitely presented Grothendieck category is called locally coherent if the full subcategory of finitely presented objects is an abelian subcategory.

\begin{theorem}\label{thm:loc-coh2}
   The category $\Mod(G)$ is locally coherent.
\end{theorem}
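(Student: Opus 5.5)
The plan is to invoke the known characterization of local coherence in terms of FP-injectives: a locally finitely presented Grothendieck category is locally coherent if and only if the class $\mathcal{FP}$-inj is closed under cokernels of monomorphisms. Equivalently, $\Ext^2(F,A)=0$ for every finitely presented $F$ and every FP-injective $A$. This criterion is due to Stenström; see \cite{BGP} for the Grothendieck category version. Corollary \ref{cor:coker} (via Prop.\ \ref{prop:inj-dim}) gives exactly this closure property, so the theorem would follow at once. Since the criterion may not be quotable verbatim, I would also sketch the direct argument.

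We must show that $\mathcal{FP}(G)$ is closed under kernels; closure under cokernels and extensions holds in any locally finitely presented category. Since every finitely presented object is a quotient of a finite direct sum of generators from Lemma \ref{lem:generators}, it suffices to show the following: every finitely generated subobject $W$ of a finitely presented object $F$ is finitely presented. Then, for a map $F_1\to F_2$ in $\mathcal{FP}(G)$, the image is finitely generated and hence finitely presented, so the kernel is finitely generated and hence finitely presented as well.

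For this reduction I would use the standard fact that $W$ is finitely presented if and only if $\Ext^1_G(W,-)$ commutes with filtered colimits, given that $W$ is finitely generated. From the exact sequence $0\to W\to F\to F/W\to 0$, the quotient $F/W$ is finitely presented. The long exact sequence then gives
\[
\Hom_G(F,-)\to\Hom_G(W,-)\to\Ext^1_G(F/W,-)\to\Ext^1_G(F,-).
\]
So it suffices to know that $\Ext^1_G(P,-)$ commutes with direct limits for finitely presented $P$. This is where Prop.\ \ref{prop:inj-dim} enters.

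Embed each object of a filtered system $(A_\lambda)$ into its FP-injective envelope, functorially via injective hulls or via the small object argument. A direct limit of FP-injectives is again FP-injective: for finitely presented $P$, $\Ext^1(P,-)$ vanishing on each term passes to the limit through Baer/Yoneda extension classes, using only $\Hom(P,-)$ and $\Hom(Q,-)$ for finitely presented $Q$. By Corollary \ref{cor:coker} the cokernels are again FP-injective. This gives a two-term FP-injective resolution $0\to A\to E^0\to E^1\to 0$, compatible with colimits, and it computes $\Ext^1_G(P,A)$ as $\operatorname{coker}(\Hom(P,E^0)\to\Hom(P,E^1))$, which commutes with direct limits.

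I expect the main obstacle to be exactly this bookkeeping: making the resolutions compatible with filtered colimits, and checking that direct limits of FP-injectives are FP-injective. I would therefore first try simply to cite Stenström's criterion (\cite{BGP}) that "$\mathcal{FP}$-inj closed under cokernels of monomorphisms" is equivalent to local coherence. I would fall back on the direct argument only if that citation is unavailable.
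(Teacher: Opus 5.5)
Your primary route is the paper's proof: the paper simply applies \cite{BGP} Thm.~5.5, the criterion that a locally finitely presented Grothendieck category is locally coherent once $\mathcal{FP}$-inj is closed under cokernels of monomorphisms, together with Corollary~\ref{cor:coker}. Do not rely on your fallback sketch, though: its key step ``a direct limit of FP-injectives is FP-injective'' is in general equivalent to local coherence (Stenstr\"om's criterion for modules) and does not follow as you claim, since a class in $\Ext^1_G(P,\varinjlim A_\lambda)$ is given on the kernel $Q$ of a presentation of $P$, and $Q$ is only known to be finitely generated, so the class need not factor through any $A_\lambda$; moreover, once that step is granted, Corollary~\ref{cor:coker} is never used, so the fallback is circular exactly where it carries weight.
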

\begin{proof}
Apply \cite{BGP} Thm.\ 5.5 based on Cor.\ \ref{cor:coker}.
\end{proof}

\section{Generalizations}\label{sec:vista}

In this section we consider the general case that $G = \mathbf{G}(L)$ is the group of $L$-points, where $L/\mathbb{Q}_p$ is a finite extension, of a semisimple simply connected reductive group $\mathbf{G}$ over $L$. The group $G$ acts on its Bruhat-Tits building $\mathcal{X}$, which has (among others) the following properties (\cite{Tit} \S 3):
\begin{itemize}
  \item[--] $\mathcal{X}$ is a $G$-CW-complex in the sense of \cite{Bro}.
  \item[--] $\mathcal{X}$ is contractible and hence simply-connected.
  \item[--] Any chosen chamber $\Delta$ of $\mathcal{X}$ is a fundamental domain for the $G$-action on $\mathcal{X}$. In particular, $G$ acts without inversion on the $1$-dimensional facets of $\mathcal{X}$.
\end{itemize}
Let $d$ denote the dimension of $\mathcal{X}$. For any facet $F$ of $\mathcal{X}$ we let $K_F \subseteq G$ denote the stabilizer of $F$. We fix a chamber $\Delta$. Now \cite{Bro} Thm.\ 3 tells us that $G$ is the colimit of the stabilizer groups $K_e$ and $K_v$, where $e$ and $v$ run over the edges and vertices of $\Delta$, respectively, with the obvious inclusion maps between them. But, indeed, we will have to take into account the group $K_F$ for all facets of $\Delta$ by considering the standard homology chain complex
\begin{equation}\label{f:chains}
  0 \xrightarrow{\quad} \bigoplus_{\dim(F) = d} k \xrightarrow{\ \partial\ } \bigoplus_{\dim(F) = d-1} k \xrightarrow{\ \partial\ } \ldots \xrightarrow{\ \partial\ } \bigoplus_{\dim(F) = 0} k \xrightarrow{\quad} k \xrightarrow{\quad} 0
\end{equation}
of the building $\mathcal{X}$. Since $G$ acts on $\mathcal{X}$ we find a $G$-invariant orientation of $\mathcal{X}$ and use it to define the boundary maps $\partial$ (cf.\ \cite{SS} \S II.1 for details). This makes \eqref{f:chains} into a complex of smooth $G$-representations (with the trivial $G$-action on $K$). Since $\mathcal{X}$ is contractible this is, in fact, an exact resolution of the trivial representation $k$ in $\Mod(G)$. The fact that $\Delta$ is a fundamental domain for the $G$-action on $\mathcal{X}$ allows to rewrite \eqref{f:chains} as a complex of compact inductions:
\begin{equation*}
  0 \rightarrow \ind_{K_\Delta}^G(k) \rightarrow \bigoplus_{F \subseteq \Delta, \dim(F) = d-1} \ind_{K_F}^G(k) \rightarrow \ldots \rightarrow \bigoplus_{F \subseteq \Delta,\dim(F) = 0} \ind_{K_F}^G(k) \rightarrow k \rightarrow 0
\end{equation*}
Given any object $V_1$ in $\Mod(G)$ we may tensor the above complex over $k$ by $V_1$ with the diagonal $G$-action. Using property c) from the beginning of section \eqref{sec:compact-ind} we finally obtain an exact resolution of $V_1$ in $\Mod(G)$ of the form
\begin{equation}\label{f:resolution}
  0 \rightarrow \ind_{K_\Delta}^G(V_1) \rightarrow \bigoplus_{F \subseteq \Delta, \dim(F) = d-1} \ind_{K_F}^G(V_1) \rightarrow \ldots \rightarrow \bigoplus_{F \subseteq \Delta,\dim(F) = 0} \ind_{K_F}^G(V_1) \rightarrow V_1 \rightarrow 0 \ .
\end{equation}
For a second object $V_2$ in $\Mod(G)$ we apply the left exact covariant functor $\Hom_G(-,V_2) : \Mod(G)^{\op} \rightarrow \mathrm{Vec}_k$ to this resolution and use the corresponding hypercohomology spectral sequences (cf.\ \cite{Mil} App.\ C) together with Frobenius reciprocity to obtain the following spectral sequence (replacing the earlier Mayer-Vietoris sequence).

\begin{theorem}\label{thm:spectral-seq}
   For any two objects $V_1$ and $V_2$ we have a convergent spectral sequence
\begin{equation*}
   E_1^{r,s} \Longrightarrow \Ext_G^{r+s}(V_1,V_2) \ ,
\end{equation*}
where
\begin{equation*}
   E_1^{r,s} \cong \bigoplus_{F \subseteq \Delta , \dim(F) = r} \Ext_{K_F}^s (V_1,V_2) \ .
\end{equation*}
\end{theorem}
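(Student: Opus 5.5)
Write $C_r := \bigoplus_{F \subseteq \Delta, \dim(F) = r} \ind_{K_F}^G(V_1)$ for $0 \le r \le d$. The plan is to regard \eqref{f:resolution} as a resolution $C_\bullet \to V_1 \to 0$ of length $d$ and apply a first-quadrant double complex (hyper-Ext) argument to $\Hom_G(C_\bullet, -)$ evaluated on an injective resolution of $V_2$. Concretely, choose an injective resolution $V_2 \to J^\bullet$ in $\Mod(G)$ and form the double complex
\begin{equation*}
D^{r,s} := \Hom_G(C_r, J^s)
\end{equation*}
with horizontal differential induced by $\partial$ and vertical differential induced by $J^\bullet$. It is bounded: $0 \le r \le d$ and $s \ge 0$. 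So both spectral sequences of the total complex converge to its cohomology.

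First I identify the total cohomology by filtering in the other direction, i.e.\ taking $r$-cohomology first. Since each $J^s$ is injective, $\Hom_G(-,J^s)$ is exact. Because \eqref{f:resolution} is exact, the $r$-cohomology of $D^{\bullet,s}$ is $\Hom_G(V_1,J^s)$ in degree $r=0$ and zero elsewhere. This spectral sequence therefore degenerates, and the total cohomology in degree $n$ is $H^n(\Hom_G(V_1,J^\bullet)) = \Ext^n_G(V_1,V_2)$.

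Next I use the filtration by $r$, taking $s$-cohomology first. This gives $E_1^{r,s} = H^s(\Hom_G(C_r,J^\bullet)) = \bigoplus_F \Ext^s_G(\ind_{K_F}^G(V_1),V_2)$, since $\Hom_G$ commutes with finite direct sums and $\Delta$ has only finitely many facets. The sequence converges to $\Ext^{r+s}_G(V_1,V_2)$ by the first step. It remains to apply Frobenius reciprocity in higher degrees, as in the proof of Theorem \ref{thm:MV2}. Since $\ind_{K_F}^G$ is exact and left adjoint to $\res^G_{K_F}$, restriction preserves injectives. Restriction is also exact, so $\res^G_{K_F} J^\bullet$ is an injective resolution of $V_2$ in $\Mod(K_F)$. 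The adjunction isomorphism $\Hom_G(\ind_{K_F}^G(V_1),J^\bullet) \cong \Hom_{K_F}(V_1,J^\bullet)$ is an isomorphism of complexes by naturality, and passing to cohomology gives $\Ext^s_G(\ind_{K_F}^G(V_1),V_2) \cong \Ext^s_{K_F}(V_1,V_2)$. Here the $K_F$ are compact open subgroups, since stabilizers of facets are compact open; this is needed for compact induction to have the stated properties.

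The main point requiring care is the step preceding all of this: that \eqref{f:resolution} really is an exact resolution by the stated compact inductions. I would take it as established in the text above. The chain complex of the contractible building is exact. Because $\Delta$ is a fundamental domain and the orientation is $G$-invariant, each $\bigoplus_{\dim F = r} k$ decomposes as $\bigoplus_{F\subseteq\Delta}\ind_{K_F}^G(\epsilon_F)$, where $\epsilon_F$ is the orientation character of $K_F$. This character may be nontrivial when $K_F$ does not fix $F$ pointwise, so the text's $k$ should strictly be read as $\epsilon_F$. Property c) also applies with $V_1\otimes\epsilon_F$, so the spectral sequence argument is unaffected: one gets $\Ext^s_{K_F}(V_1\otimes\epsilon_F,V_2)$. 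I would note this twist and otherwise proceed exactly as above. Tensoring with $V_1$ over the field $k$ preserves exactness. The identification with $\ind_{K_F}^G(V_1)$ is property c), whose isomorphisms commute with the differentials by the same computation as Lemma \ref{lem:property-c}.
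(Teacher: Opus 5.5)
Your argument is correct and is the same as the paper's. The paper applies $\Hom_G(-,V_2)$ to the resolution \eqref{f:resolution}, takes the hypercohomology spectral sequence (your double complex $\Hom_G(C_r,J^s)$ with its two filtrations spells this out), and identifies the $E_1$-terms by Frobenius reciprocity for $\Ext$, as in Theorem \ref{thm:MV2}.

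Your caveat about orientation characters is unnecessary here. Because $\Delta$ is a strict fundamental domain, any $g\in K_F$ sends each $x\in F\subseteq\Delta$ to a point of $\Delta$ in the same $G$-orbit, so $gx=x$. Hence $K_F$ fixes $F$ pointwise, every $\epsilon_F$ is trivial (this is also what makes a $G$-invariant orientation exist), and the statement holds exactly as written.
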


To draw a vanishing result from the above spectral sequence we first introduce some notation from \cite{BGP}. By $\mathcal{FP}_\infty(G)$ we denote the class of objects $F$ in $\Mod(G)$ for which the functors $\Ext_\mathcal{C}^j(F,-)$, for any $j \geq 0$, preserve direct limits. Moreover, $\mathcal{FP}_\infty$-inj$(G)$ denotes the class of objects $A$ in $\Mod(G)$ such that $\Ext_G^1(F,A) = 0$ for any $F$ in $\mathcal{FP}_\infty(G)$. 

The first four Lemmas with their proofs in section \ref{sec:loc-coh} are of a general nature. We therefore have the following facts.

\begin{lemma}\label{lem:4-general}
\begin{itemize}
  \item[i.] The category $\Mod(G)$ is locally finitely presented.
  \item[ii.] The restriction of any $V$ in $\mathcal{FP}_\infty$-inj$(G)$ to a compact open subgroup $H \subseteq G$ is an injective object in $\Mod(H)$.
\end{itemize}
\end{lemma}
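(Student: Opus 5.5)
Both parts follow the arguments of Section~\ref{sec:loc-coh}; none of those arguments used hypothesis (A). We only have to check that the modified notion $\mathcal{FP}_\infty$ fits.

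For i., we reuse Lemma~\ref{lem:generators} and Lemma~\ref{lem:ind-fp} verbatim. Every object of $\Mod(G)$ is the direct limit of its finitely generated subobjects. By Lemma~\ref{lem:generators}, the $\ind_U^G(k)$, with $U$ running over a fundamental system of compact open subgroups of $G$, form a set of generators. By Lemma~\ref{lem:ind-fp}, each $\ind_U^G(k)$ is finitely presented. Hence every object is a direct limit of finitely presented objects, i.e.\ $\Mod(G)$ is locally finitely presented. Neither lemma used the amalgam structure, only that $G$ is locally profinite.

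For ii., we mimic Lemma~\ref{lem:ind-res}.ii, with $\mathcal{FP}$ replaced by $\mathcal{FP}_\infty$. Let $M$ be a finite dimensional object of $\Mod(H)$. The key step is to show that $\ind_H^G(M)$ lies in $\mathcal{FP}_\infty(G)$. Restriction has the exact left adjoint $\ind_H^G$, so it preserves injectives. Together with the exactness of $\ind_H^G$ this gives
\begin{equation*}
  \Ext^j_G(\ind_H^G(M),V) \cong \Ext^j_H(M,\res^G_H V) \cong H^j(H, M^*\otimes_k V)
\end{equation*}
for all $j \geq 0$. Continuous cohomology of the profinite group $H$ with discrete coefficients commutes with direct limits. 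Restriction and $M^*\otimes_k -$ also commute with direct limits. Hence every $\Ext^j_G(\ind_H^G(M),-)$ preserves direct limits, as required. This is the same reasoning as in Lemma~\ref{lem:ind-fp}, now applied in all degrees.

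Now let $V \in \mathcal{FP}_\infty$-inj$(G)$. For every finite dimensional $M$ in $\Mod(H)$ the isomorphism above gives $\Ext^1_H(M,\res^G_H V) \cong \Ext^1_G(\ind_H^G(M),V) = 0$. So $\res^G_H V$ is $\mathcal{FP}$-injective in $\Mod(H)$, because by Lemma~\ref{lem:profinite}.i the finitely presented objects of $\Mod(H)$ are exactly the finite dimensional ones. By Lemma~\ref{lem:profinite}.ii, $\res^G_H V$ is then injective in $\Mod(H)$. The only point needing care, and the one I expect to be the main obstacle, is the passage from $\mathcal{FP}$ to $\mathcal{FP}_\infty$. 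This rests on the vanishing-in-all-degrees version of the commutation of profinite cohomology with direct limits, which is standard.
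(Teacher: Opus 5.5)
Your proposal is correct and takes the same route as the paper, which simply observes that the lemmas of Section~\ref{sec:loc-coh} (generators, finite presentation of compact inductions, the profinite case, and the induction/restriction adjunction) never used (A). Your explicit check that $\ind_H^G(M)$ lies in $\mathcal{FP}_\infty(G)$, via $\Ext^j_G(\ind_H^G(M),-)\cong H^j(H,M^*\otimes_k -)$ commuting with direct limits in all degrees, is exactly the step the paper leaves implicit and only mentions in Remark~\ref{rem:infty-coherent}.
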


\begin{theorem}\label{thm:Ext-d}
   For any $V_1$ in $\Mod(G)$ and $V_2$ in $\mathcal{FP}_\infty$-inj$(G)$ we have
\begin{equation*}
  \Ext_G^j(V_1,V_2) = 0  \qquad \text{for any $j > d$},
\end{equation*}
i.e., the objects in $\mathcal{FP}_\infty$-inj$(G)$ have injective dimension $\leq d$.
\end{theorem}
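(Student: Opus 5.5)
The plan is to use the spectral sequence of Theorem \ref{thm:spectral-seq}. Since $V_2$ lies in $\mathcal{FP}_\infty$-inj$(G)$, Lemma \ref{lem:4-general}.ii shows that $\res^G_{K_F}(V_2)$ is injective in $\Mod(K_F)$ for every facet $F \subseteq \Delta$. Each $K_F$ is compact open, being the stabilizer of a facet in the building. Hence $\Ext^s_{K_F}(V_1,V_2) = 0$ for all $s \geq 1$, and so
\begin{equation*}
  E_1^{r,s} \cong \bigoplus_{F \subseteq \Delta,\ \dim(F) = r} \Ext^s_{K_F}(V_1,V_2) = 0 \qquad \text{for } s \geq 1 .
\end{equation*}
Moreover, $E_1^{r,s} = 0$ for $r > d$ and for $r < 0$, because the resolution \eqref{f:resolution} has length $d$.

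The $E_1$-page is therefore concentrated in the row $s = 0$ and the columns $0 \leq r \leq d$. The spectral sequence degenerates at $E_2$, and $\Ext^j_G(V_1,V_2) \cong E_2^{j,0}$. This group vanishes for $j > d$ since $E_1^{j,0} = 0$ there. Convergence is unproblematic: for each total degree $j$ the filtration has only finitely many pieces $E_\infty^{r,j-r}$ with $0 \leq r \leq d$, and all of them vanish when $j > d$.

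Alternatively, I would argue directly without spectral sequences. Applying $\Hom_G(-,V_2)$ to \eqref{f:resolution} and using Frobenius reciprocity, the terms $\ind_{K_F}^G(V_1)$ are $\Hom_G(-,V_2)$-acyclic, because
\begin{equation*}
  \Ext^s_G(\ind_{K_F}^G(V_1),V_2) \cong \Ext^s_{K_F}(V_1,V_2) = 0 \qquad \text{for } s \geq 1 .
\end{equation*}
This uses that restriction preserves injectives, as in the proof of Theorem \ref{thm:MV2}. Hence \eqref{f:resolution} is an acyclic resolution of length $d$. By the standard dimension-shifting argument, $\Ext^j_G(V_1,V_2)$ is computed by the cohomology of the complex $\Hom_G(\text{resolution},V_2)$, which vanishes in degrees $> d$.

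The only point requiring care is this identification of $\Ext$ via an acyclic rather than projective resolution. I would verify it by splitting \eqref{f:resolution} into short exact sequences and shifting dimensions, which is routine.
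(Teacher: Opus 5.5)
Your proposal is correct and is essentially the paper's proof. The paper also just observes, via Lemma~\ref{lem:4-general}.ii, that in the spectral sequence of Theorem~\ref{thm:spectral-seq} one has $E_1^{r,s}=0$ whenever $r>d$ or $s>0$, and your alternative dimension-shifting argument along the $\Hom_G(-,V_2)$-acyclic resolution \eqref{f:resolution} is simply the spectral-sequence-free version of the same idea.
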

\begin{proof}
It follows from Lemma \ref{lem:4-general} that $E_1^{r,s} = 0$ whenever $r > d$ or $s > 0$.
\end{proof}

\begin{remark}\label{rem:infty-coherent}
  The functor $\Ext^j_G(\ind_U^G(k),-) = \Ext^j_Uk,-) = H^j(U,-)$ on $\Mod(G)$ respects direct limits for any $j \geq 0$. Therefore Lemma \ref{lem:generators} implies that the category $\Mod(G)$ is locally type $\mathcal{FP}_\infty$ in the language of \cite{BGP} Def.\ 3.2. By the conventions of that paper in Def.\ 5.1 such categories are also called $\infty$-coherent. We emphasize that $\mathcal{FP}_\infty(G)$ is a thick subcategory of $\Mod(G)$ by \cite{Gil} Prop.\ 3.3.
\end{remark}

\end{document}